\setlist[enumerate]{label={\arabic*.}}
\setlist[description]{font=\normalfont\slshape}
\newcommand{\refcheckize}[1]{%
  \expandafter\let\csname @@\string#1\endcsname#1%
  \expandafter\DeclareRobustCommand\csname relax\string#1\endcsname[1]{%
    \csname @@\string#1\endcsname{##1}\wrtusdrf{##1}}%
  \expandafter\let\expandafter#1\csname relax\string#1\endcsname
}
\newtheorem{theorem}{Theorem}[section]
\newtheorem{lemma}[theorem]{Lemma}
\newtheorem{corollary}[theorem]{Corollary}
\theoremstyle{definition}
\newtheorem{problem}[theorem]{Problem}
\newtheorem{example}[theorem]{Example}
\renewcommand\bar{\overline}
\renewcommand\hat{\widehat}
\newcommand{\eps}{\epsilon}
\renewcommand\subset{\subseteq}
\renewcommand\phi{\varphi}
\newcommand\br[1]{\left(#1\right)}
\def\bias{\opr{bias}}
\def\torus{\mathbf{T}}
\def\tr{\opr{tr}}  
\newcommand\opr[1]{\operatorname{#1}}
\def\R{\mathbf{R}}
\def\C{\mathbf{C}}
\def\Z{\mathbf{Z}}
\def\F{\mathbf{F}}
\def\P{\mathbf{P}}
\def\E{\mathbf{E}}
\def\AR{\opr{AR}}
\def\PR{\opr{PR}}
\def\CB{\opr{CB}}
\def\cod{\opr{cod}}
\begin{document}

\title{Biased multilinear maps of abelian groups}
\author{Sean Eberhard}

\thanks{S. Eberhard has received funding from the European Research Council (ERC) under the European Union’s Horizon 2020 research and innovation programme (grant agreement No. 803711).}

\begin{abstract}
    We adapt the theory of partition rank and analytic rank to the category of abelian groups.
    If $A_1, \dots, A_k$ are finite abelian groups and $\phi : A_1 \times \cdots \times A_k \to \torus$ is a multilinear map, where $\torus = \R/\Z$,
    the bias of $\phi$ is defined to be the average value of $\exp(i 2 \pi \phi)$.
    If the bias of $\phi$ is bounded away from zero we show that $\phi$ is the sum of boundedly many multilinear maps each of which factors through the standard multiplication map of $\Z/q\Z$ for some bounded prime power $q$.
    Relatedly, if $F : A_1 \times \cdots \times A_{k-1} \to B$ is a multilinear map such that $\P(F = 0)$ is bounded away from zero, we show that $F$ is the sum of boundedly many multilinear functions of a particular form.
    These structure theorems generalize work of several authors in the elementary abelian case to the arbitrary abelian case.
    The set of all possible biases is also investigated.
\end{abstract}

\maketitle

\tableofcontents

\section{Introduction}

Suppose $A_1, \dots, A_k$ are finite abelian groups and $\phi : A_1 \times \cdots \times A_k \to \torus$ is a multilinear map,
where $\torus = \R/\Z$.
Let $e(x) = \exp(i 2 \pi x)$ be the standard character of $\torus$.
The \emph{bias} of $\phi$ is defined by
\begin{equation}
  \label{bias-def}
  \bias(\phi) = \E_{x \in A_{[k]}} e(\phi(x)).
\end{equation}
Here and throughout we use the following index notation.
The symbol $[k]$ denotes the index set $\{1, \dots, k\}$.
For $I \subset [k]$,
\[
  A_I = \prod_{i \in I} A_i.
\]
For $x \in A_{[k]}$,
\[
  x_I = (x_i)_{i \in I} \in A_I.
\]
We write $I^c$ for $[k] \setminus I$.
Also, we are using the expectation symbol to denote the (finite) average over the set $A_{[k]}$.

The concept of bias often comes up in the following way.
Suppose $F : A_{[k-1]} \to B$ is a multilinear map of abelian groups
such that $\P(F = 0)$ is bounded away from zero.
Letting $A_k$ be the dual group $\hat B$, we can define a multilinear map $\phi : A_{[k]} \to \torus$ by
\[
    \phi(x) = x_k(F(x_{[k-1]})),
\]
and for this map we have $\bias(\phi) = \P(F = 0)$.
Moreover, $F$ is determined by $\phi$, so if we can say something about the structure of $\phi$
then we will know something about the structure of $F$.
Thus the study of multilinear kernels is reduced to the study of bias.

If $A_1, \dots, A_k$ are vector spaces over a field $\F = \F_q$ then it is usually more natural to consider a multi-$\F$-linear map
$\phi : A_{[k]} \to \F$ and to define
\[
    \bias(\phi) = \E_{x \in A_{[k]}} \chi(\phi(x))
\]
where $\chi : \F_q \to \torus$ is any standardized character such as $\chi(x) = e((\tr x) / p)$,
where $\tr : \F_q \to \F_p$ is the absolute trace.
Bias in this context was introduced by Gowers and Wolf in \cite{gowers-wolf} and studied extensively by several authors.
The term \emph{analytic rank} is used for the quantity
\[
    \AR(\phi) = \log_q (\bias(\phi)^{-1}).
\]
The \emph{partition rank} $\PR(\phi)$ of $\phi$ is the minimal number $r$ such that $\phi$ is the sum of $r$ multilinear maps of the form
\[
    \phi_1(x_I) \phi_2(x_{I^c}),
\]
where $I \subset [k]$ and $0 < |I| < k$.
Multilinear maps of bounded partition rank are uniformly biased: in fact, $\AR(\phi) \leq \PR(\phi)$.
This is one of several nice lemmas appearing in a paper by Lovett~\cite{lovett}.
The main theorem in this field is a converse: uniformly biased multilinear maps have bounded partition rank,
or in other words partition rank is bounded in terms of analytic rank.

\begin{theorem}
    \label{thm:AR-PR-field-case}
    Suppose $A_1, \dots, A_k$ are finite $\F_q$-vector spaces and $\phi : A_{[k]} \to \F_q$ is multilinear over $\F_q$.
    If $\bias(\phi) \geq \eps > 0$ then $\phi$ has partition rank $O_{\eps, k,q}(1)$.
    In other words, $\PR(\phi) \leq F(k, \AR(\phi), q)$ for some function $F$.
\end{theorem}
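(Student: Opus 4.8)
The plan is to first reduce to a bounded field and then induct on the number of factors $k$, slicing in one coordinate. For the reduction, I would observe that a nonzero multilinear $\phi$ already has small bias: rescaling the first coordinate shows $\bias(t\phi) = \bias(\phi)$ for every $t \in \F_q^\times$, so that $\P(\phi = 0) = q^{-1}\bigl(1 + (q-1)\bias(\phi)\bigr)$; on the other hand, written out in coordinates $\phi$ becomes a polynomial of total degree at most $k$ in which every variable occurs to degree at most one (hence nonzero as a polynomial as soon as it is nonzero as a map), so $\P(\phi = 0) \le k/q$ by the Schwartz--Zippel bound. Combining, $\bias(\phi) \le (k-1)/(q-1)$, so the hypothesis $\bias(\phi) \ge \eps$ forces $\phi \equiv 0$ once $q > 1 + (k-1)/\eps$; thus we may assume $q \le 1 + (k-1)/\eps = O_{k,\eps}(1)$.

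For the base case $k = 2$, the partition rank of a bilinear form equals its matrix rank $r$ over $\F_q$, and in suitable bases $\phi(x_1, x_2) = \sum_{i=1}^r (x_1)_i (x_2)_i$, so $\bias(\phi) = q^{-r}$ and $\bias(\phi) \ge \eps$ gives $\PR(\phi) = r \le \log_q(1/\eps)$. For the inductive step I would assume the theorem for $k-1$ and, for $h \in A_k$, pass to the slice $\phi_h : A_{[k-1]} \to \F_q$, $\phi_h(x_{[k-1]}) = \phi(x_{[k-1]}, h)$, which is multilinear and depends $\F_q$-linearly on $h$. A single Cauchy--Schwarz in the last coordinate, together with multilinearity, shows $\bias(\psi) = \E_{x_{[j-1]}} \bigl| \E_{x_j} \chi(\psi(x)) \bigr|^2 \ge 0$ for every multilinear $\psi$ on $A_{[j]}$; in particular $\bias(\phi_h) \in [0,1]$ for all $h$. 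Since $\bias(\phi) = \E_{h \in A_k} \bias(\phi_h) \ge \eps$, at least an $\eps/2$ proportion of the $h$ satisfy $\bias(\phi_h) \ge \eps/2$, and for those the inductive hypothesis gives $\PR(\phi_h) \le r_0 := F\bigl(k-1, \log_q(2/\eps), q\bigr)$.

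It then remains to bound $\PR(\phi)$ knowing that a positive proportion of the subspace $U = \{\phi_h : h \in A_k\}$ of $(k-1)$-linear forms consists of forms of partition rank at most $r_0$ --- and this is the heart of the matter. Each such form is built along one of at most $2^{kr_0}$ partition patterns, so by pigeonhole a positive proportion of $U$ is decomposable along a single common pattern; the real work is to then choose the factors of $\phi_h$ to depend on $h$ in a controlled enough way --- morally $\F_q$-linearly, after restricting to a bounded-codimension subspace of $A_k$ and absorbing a bounded-rank error --- that they reassemble into a partition-rank decomposition of the $k$-linear form $\phi$ itself. This cannot be done by soft counting, since the set of low-partition-rank forms is too large to be pinned down by density alone; it is exactly here that one must bring in the machinery of the elementary abelian theory (Green--Tao, Kazhdan--Ziegler, Janzer, Mili\'cevi\'c), either an algebro-geometric analysis of the common zero locus or a regularity/inverse-theorem argument that successively peels a bounded-partition-rank piece off $\phi$ until the remaining slices form a ``high-rank'' system whose bias can be estimated directly. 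I expect this gluing step to be the whole obstacle: everything preceding it is essentially routine, and all the content of the theorem is concentrated there.
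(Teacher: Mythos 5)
The paper does not give a proof of \Cref{thm:AR-PR-field-case}: it is stated as a known result of Green and Tao (with subsequent quantitative refinements by Bhowmick--Lovett, Janzer, Mili\'cevi\'c, and Cohen--Moshkovitz) and is used purely as a black box, so there is no in-paper argument to measure yours against. The question is simply whether your sketch constitutes a proof, and it does not --- as you yourself say.

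Your preliminary reductions are correct. The Schwartz--Zippel bound $\bias(\phi) \le (k-1)/(q-1)$ for nonzero multilinear $\phi$ does force $q \le 1 + (k-1)/\eps$ (though this reduction is optional, since the statement already allows the bound to depend on $q$); the base case $k=2$ is right; and slicing $\bias(\phi) = \E_{h \in A_k}\bias(\phi_h)$ with $\bias(\phi_h)\in[0,1]$ gives a positive-density set of $h$ with $\PR(\phi_h)$ bounded. (Incidentally, your displayed identity with the square holds only by the accident that the inner average is the indicator that a linear form vanishes, so it already lies in $\{0,1\}$; Cauchy--Schwarz is not the reason.) But the step you cannot carry out --- passing from a positive-density set of slices of bounded partition rank to a bound on $\PR(\phi)$ itself --- is precisely where all the content of the theorem is concentrated. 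You are right that density alone cannot do it: the set of forms of partition rank at most $r_0$ is far too large, and one needs either the regularity / energy-increment iteration of Green--Tao and Bhowmick--Lovett or the sharper analysis of the variety of low-bias directions in Janzer, Mili\'cevi\'c, and Cohen--Moshkovitz. Correctly naming the obstacle is not the same as removing it; as written, this is a reduction, not a proof.
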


This result was essentially proved by Green and Tao~\cite{green-tao}.
Subsequent research has focused on quantitative aspects (which were initially poor).
The dependence on $q$ was removed by Bhowmick and Lovett in \cite{bhowmick-lovett} (so $\PR(\phi) \leq F(k, \AR(\phi))$).
In breakthrough work Janzer~\cite{janzer} and Mili\'cevi\'c~\cite{milicevic} independently proved that $\PR(\phi) \leq C_k(\AR(\phi)^{D_k} + 1)$ for constants $C_k, D_k$.
Recently, Cohen and Moshkovitz~\cite{cohen-moshkovitz} proved that $\PR(\phi) \leq (2^{k-1}+1) \AR(\phi) + 1$ provided that $q > q_0(k, \AR(\phi))$.

In this note we consider biased multilinear maps of arbitrary abelian groups.
We will use \Cref{thm:AR-PR-field-case} as a block box (and only the prime field case)
and we will deduce an analogous structure theorem for arbitrary abelian groups.
To state this structure theorem we need a suitable notion of partition rank.
For $q$ a prime power, let $m_q : (\Z/q\Z)^2 \to \torus$ be the bilinear map defined by $m(x, y) = xy / q \bmod 1$.
We say that $\phi : A_{[k]} \to \torus$ \emph{factors through} $m_q$ if it has the form
\[
    \phi(x) = m_q(\phi_1(x_I), \phi_2(x_{I^c}))
\]
for some $I \subset [k]$ with $0 < |I| < k$.
Here $\phi_1 : A_I \to \Z/q\Z$ and $\phi_2 : A_{I^c} \to \Z/q\Z$ must both be multilinear.

\begin{theorem}
    \label{thm:bias-inverse-theorem}
    Suppose $A_1, \dots, A_k$ are finite abelian groups and $\phi : A_{[k]} \to \torus$ is multilinear.
    If $\bias(\phi) \geq \eps > 0$ then $\phi$ is the sum of $O_{\eps, k}(1)$ multilinear maps each of which factors through $m_q$ for some prime power $q \leq O_{\eps, k}(1)$.
\end{theorem}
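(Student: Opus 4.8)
The plan is to use the prime field case of \Cref{thm:AR-PR-field-case} as a black box and bootstrap in three steps: pass to $p$-groups via the Chinese remainder theorem; reduce to $p$-groups of bounded exponent; and treat the bounded exponent case by induction on the exponent. First I record that $\bias(\phi)\in[0,1]$: currying the last coordinate produces a multilinear map $F\colon A_{[k-1]}\to\hat A_k$ with $\phi(x)=F(x_{[k-1]})(x_k)$, and averaging over $x_k$ gives $\bias(\phi)=\P_{x_{[k-1]}}\bigl(F(x_{[k-1]})=0\bigr)$. Writing each $A_i$ as the direct sum of its primary components, multilinearity forces $\phi=\sum_\ell\phi_\ell$, where $\phi_\ell$ is the restriction of $\phi$ to the $\ell$-primary parts (a genuine summand, since a multilinear map annihilates any tuple two of whose entries have coprime orders); hence $\bias(\phi)=\prod_\ell\bias(\phi_\ell)$, and since each factor lies in $[0,1]$ we get $\bias(\phi_\ell)\geq\eps$ for every $\ell$. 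It therefore suffices to prove the theorem when all $A_i$ are finite abelian $p$-groups, with the sharper conclusion that the prime powers $q$ that occur satisfy $q\leq C(\eps,k)$; granting this, any prime $\ell$ with $\phi_\ell\neq 0$ must have $\ell\leq C(\eps,k)$, so only $O_{\eps,k}(1)$ primes contribute and the individual bounds add up.

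\textbf{Reduction to bounded exponent.}
Next I claim that if $\phi\colon A_{[k]}\to\torus$ is multilinear on $p$-groups and some value $\phi(a^\ast)$ has order $p^M$, then $\bias(\phi)\leq(M+1)^{k-1}p^{-M}$. Given this, $\bias(\phi)\geq\eps$ forces $\max_x\opr{ord}(\phi(x))\leq C(\eps,k)$; since $\phi(p^{E}a_i',x_{-i})=p^{E}\phi(a_i',x_{-i})$, this means $\phi$ annihilates $p^{E}A_i$ in every coordinate for the single $E$ with $p^{E}=\max_x\opr{ord}(\phi(x))$, and so, replacing $A_i$ by $A_i/p^{E}A_i$ (which leaves $\bias$ unchanged), we may assume every $A_i$ has exponent dividing $p^{E}$ and $\phi$ is valued in $\torus[p^{E}]$, with $p^{E}\leq C(\eps,k)$. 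The estimate is proved by induction on $k$. For $k=1$ a homomorphism with a value of order $p^M$ has image of size $\geq p^M$, so bias $\leq p^{-M}$. For $k\geq 2$, write $\bias(\phi)=\E_{x_1}\bias\bigl(\phi(x_1,\cdot)\bigr)$ and let $L(x_1):=\phi(x_1,a_2^\ast,\dots,a_k^\ast)$, a homomorphism $A_1\to\torus$ of order exactly $p^M$; the slice $\phi(x_1,\cdot)$ has a value of order $\geq\opr{ord}(L(x_1))$, and $\P_{x_1}\bigl(\opr{ord}(L(x_1))\leq p^{j}\bigr)\leq p^{j-M}$, so feeding the $(k-1)$-linear bound into $\bias(\phi)\leq\sum_{0\leq j\leq M}\P_{x_1}\bigl(\opr{ord}(L(x_1))=p^{j}\bigr)(j+1)^{k-2}p^{-j}$ closes the induction.

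\textbf{The bounded exponent case.}
We are reduced to $\phi\colon A_{[k]}\to\torus[p^{E}]$ multilinear on $p$-groups of exponent dividing $p^{E}$, with $\bias(\phi)\geq\eps$ and $p^{E}\leq C(\eps,k)$, and we induct on $E$. If $E=1$ then all $A_i$ are elementary abelian and $\phi$ is $\F_p$-multilinear into $\torus[p]\cong\F_p$, so \Cref{thm:AR-PR-field-case} gives $\PR(\phi)\leq r(\eps,k)$, and each rank-one summand $g(x_I)h(x_{I^c})$ over $\F_p$ is exactly $m_p\bigl(g(x_I),h(x_{I^c})\bigr)$. For $E\geq 2$, put $\bar\phi:=p^{E-1}\phi$, an $\F_p$-multilinear map; since $\bias(p^{m}\phi)=\P_{x_{[k-1]}}\bigl(\opr{ord}(F(x_{[k-1]}))\mid p^{m}\bigr)\geq\bias(\phi)$ for all $m$, we have $\bias(\bar\phi)\geq\eps$. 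The structural point is that $\bar\phi$ is supported on the ``tall part'' $A_i^{\mathrm t}$ of each $A_i$, namely the direct summand spanned by the cyclic factors of order exactly $p^{E}$: if some entry of $x$ has order at most $p^{E-1}$ then $\opr{ord}(\phi(x))\leq p^{E-1}$ and hence $\bar\phi(x)=0$. Viewing $\bar\phi$ as an $\F_p$-multilinear map on the vector spaces $A_i^{\mathrm t}/pA_i^{\mathrm t}$ (which does not change its bias), \Cref{thm:AR-PR-field-case} yields $\bar\phi=\sum_{s=1}^{r}m_p\bigl(g_s(x_{I_s}),h_s(x_{I_s^c})\bigr)$ with $r\leq r(\eps,k)$ and $g_s,h_s$ supported on tall parts; since each $A_i^{\mathrm t}$ is a free $\Z/p^{E}\Z$-module, these $g_s,h_s$ lift to multilinear maps $\hat g_s,\hat h_s$ into $\Z/p^{E}\Z$ (extended by zero off the tall parts). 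Then $\Lambda:=\sum_{s}m_{p^{E}}\bigl(\hat g_s(x_{I_s}),\hat h_s(x_{I_s^c})\bigr)$ is a sum of at most $r$ maps factoring through $m_{p^{E}}$, with $p^{E}\leq C(\eps,k)$, and $p^{E-1}\Lambda=\sum_s m_p(g_s,h_s)=\bar\phi=p^{E-1}\phi$, so $\phi-\Lambda$ is valued in $\torus[p^{E-1}]$. Applying the inductive hypothesis to $\phi-\Lambda$ and adding back $\Lambda$ would finish the proof.

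\textbf{The main obstacle.}
The gap is that the inductive step needs $\phi-\Lambda$ to satisfy a bias lower bound $\bias(\phi-\Lambda)\geq\eps'(\eps,k,E)>0$ — any positive bound depending only on $\eps,k,E$ suffices, since $E=O_{\eps,k}(1)$ — yet subtracting $\Lambda$ can destroy the bias, and nothing so far controls it; I expect this to be the technical heart of the argument. The natural remedy is to choose the lift $\Lambda$ not arbitrarily but so that it vanishes, to the appropriate order, on the set $\{F=0\}\times A_k$ on which $\phi$ already vanishes, so that $\phi-\Lambda$ inherits the bias of $\phi$; arranging this while keeping $\Lambda$ a sum of boundedly many $m_{p^{E}}$-maps — that is, realising the required vanishing using the freedom in the decomposition of $\bar\phi$ and in the lifts $\hat g_s,\hat h_s$ — is where the real work should lie, and it may be cleaner to replace the layer-by-layer induction by an argument treating all $E$ layers simultaneously. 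The other ingredients (the Chinese remainder reduction, the order-versus-bias estimate, the $E=1$ base case, and the liftability of tall-supported forms) I expect to be routine.
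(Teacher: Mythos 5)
Your identified ``gap'' is not actually a gap: a simple general lemma closes it, and it is the same lemma the paper uses at the analogous point. Bias is submultiplicative under addition: for any multilinear $\phi,\psi\colon A_{[k]}\to\torus$ one has $\bias(\phi+\psi)\geq\bias(\phi)\bias(\psi)$. (Sketch: $\bias(\phi)\bias(\psi)=\E_{x,y}\,e(\phi(x)+\psi(x+y))$; expand $\psi(x+y)$ multilinearly in $(x,y)$; for each fixed $y$ one has $|\E_x\,e(\phi(x)+\psi(x+y))|\leq\E_x\,e(\phi(x)+\psi(x))=\bias(\phi+\psi)$, by completing the sum over one variable at a time and dropping absolute values since the inner sum is a nonnegative real; finish by the triangle inequality over $y$.) Combined with the easy lower bound that a sum of $r$ multilinear maps, the $s$-th of which factors through $m_{q_s}$, has bias at least $(q_1\cdots q_r)^{-1}$ (applied to $-\Lambda$, which has the same shape as $\Lambda$), this gives
\[
  \bias(\phi-\Lambda)\geq\bias(\phi)\,\bias(-\Lambda)\geq\eps\cdot p^{-Er},
\]
which is $\geq\eps'(\eps,k)>0$ since $p^{E}\leq C(\eps,k)$ and $r\leq r(\eps,k)$. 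So no clever choice of lift is needed at all: any lift works, and your induction on $E$ closes. Everything else in your argument is sound (the primary decomposition, the $(M+1)^{k-1}p^{-M}$ bound and passage to bounded exponent, the $E=1$ base case, the observation that $p^{E-1}\phi$ is supported on tall parts, and the lift using freeness of the tall summands over $\Z/p^E\Z$).

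With that lemma supplied, your route is genuinely different from the paper's, in an interesting way. The paper inducts on $n_1+\cdots+n_k$ where $p^{n_i}$ is the exponent of $A_i$; at each step it restricts $\phi$ to $pA_1\times A_{[2,k]}$ (which can only increase the bias), applies the inductive hypothesis on that restriction, and then uses an elementary ``extension of rank-one maps'' lemma to push the resulting decomposition from the subgroup back up to $A_{[k]}$ before subtracting; the $\F_p$ inverse theorem is invoked only at the elementary abelian base case. You instead strip the top $p$-adic layer $p^{E-1}\phi$, observe that it is an honest $\F_p$-multilinear form on the tall parts, apply the $\F_p$ inverse theorem at every level, and lift using freeness of the tall summands. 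Your version gives a cleaner single induction on the exponent, at the cost of invoking the field theorem repeatedly rather than once; both versions rely on the same two bias lemmas (subadditivity, and the low-rank lower bound) to control the bias of the remainder after subtraction.
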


It is easy to see that \Cref{thm:AR-PR-field-case} and \Cref{thm:bias-inverse-theorem} agree in the case of vector spaces over a prime finite field,
but neither result is more general than the other.
(A common generalization would consider $R$-modules for some commutative ring $R$;
\Cref{thm:AR-PR-field-case} would be the case $R = \F_q$ and \Cref{thm:bias-inverse-theorem} would be the case $R = \Z$.)

The following corollary follows from the connection mentioned already between bias and multilinear kernels.
It states that a multilinear function $F$ with $\P(F = 0)$ bounded away from zero
must be the sum of boundedly many functions each of which
``crushes'' a nontrivial subset of the variables before mapping to the range.
Here $\cod(g)$ denotes the codomain of the function $g$.

\begin{corollary}
    \label{cor:main-corollary}
    Suppose $A_1, \dots, A_{k-1}, B$ are finite abelian groups and $F : A_{[k-1]} \to B$ is a multilinear map such that $\P(F = 0) \geq \eps > 0$.
    Then there is an expression
    \[
        F(x) = \sum_{\emptyset \neq I \subset [k-1]} G_I(g_I(x_I), x_{[k-1] \setminus I}),
    \]
    where for each $I$ the functions $g_I$ and $G_I$ are multilinear maps
    \begin{align*}
        &g_I : A_I \to \cod(g_I), \\
        &G_I : \cod(g_I) \times A_{[k-1] \setminus I} \to B,
    \end{align*}
    and $|\cod(g_I)| \leq O_{\eps,k}(1)$.
\end{corollary}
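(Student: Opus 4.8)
The plan is to deduce this from \Cref{thm:bias-inverse-theorem} via the dictionary between bias and multilinear kernels sketched in the introduction. Concretely, I would set $A_k = \hat B$ and define $\phi : A_{[k]} \to \torus$ by $\phi(x) = x_k(F(x_{[k-1]}))$. Since $F$ is multilinear and each $x_k \in \hat B$ is a homomorphism, $\phi$ is multilinear; and averaging over $x_k \in \hat B$ and using orthogonality of characters gives $\bias(\phi) = \E_{x_{[k-1]}}\, \mathbf 1[F(x_{[k-1]}) = 0] = \P(F = 0) \geq \eps$. So \Cref{thm:bias-inverse-theorem} applies and yields a decomposition $\phi = \sum_{j=1}^r \psi_j$ with $r = O_{\eps,k}(1)$, where each $\psi_j$ is multilinear and factors through $m_{q_j}$ for some prime power $q_j = O_{\eps,k}(1)$, say $\psi_j(x) = m_{q_j}(\alpha_j(x_{I_j}), \beta_j(x_{I_j^c}))$ with $\alpha_j, \beta_j$ multilinear into $\Z/q_j\Z$ and $0 < |I_j| < k$.

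Next I would translate this decomposition back to a statement about $F$. Each $\psi_j$ is multilinear, hence linear in the variable $x_k$, so there is a unique $F_j(x_{[k-1]}) \in B = \hat{\hat B}$ with $\psi_j(x) = x_k(F_j(x_{[k-1]}))$; multilinearity of $\psi_j$ forces $F_j$ to be multilinear, and $\sum_j F_j = F$ since characters separate the points of $B$. It remains to put each $F_j$ into the required shape. Exactly one of $I_j, I_j^c$ avoids the index $k$; call it $P_j$, so $\emptyset \neq P_j \subset [k-1]$, and using the symmetry $m_q(a,b) = m_q(b,a)$ write $\psi_j(x) = m_{q_j}(\alpha_j(x_{P_j}), \beta_j(x_{Q_j}))$ with $Q_j = [k] \setminus P_j \ni k$. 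For fixed values of the remaining variables, $x_k \mapsto \beta_j(x_{Q_j})$ is a homomorphism $\hat B \to \Z/q_j\Z$; regarding $\Z/q_j\Z$ as the subgroup $\tfrac{1}{q_j}\Z/\Z$ of $\torus$ and using $\hat{\hat B} \cong B$, it corresponds to an element $h_j(x_{Q_j \setminus \{k\}})$ of the $q_j$-torsion subgroup $B[q_j]$, and $h_j$ is multilinear in $x_{Q_j\setminus\{k\}}$. A short computation with $m_{q_j}$ then gives $F_j(x_{[k-1]}) = \alpha_j(x_{P_j}) \cdot h_j(x_{[k-1]\setminus P_j})$, where $\alpha_j(x_{P_j}) \in \Z/q_j\Z$ acts on $h_j(x_{[k-1]\setminus P_j}) \in B[q_j]$ via the module structure.

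Finally I would assemble the pieces. For a fixed nonempty $I \subset [k-1]$, collect the indices $j$ with $P_j = I$, put $g_I = (\alpha_j)_{j : P_j = I} : A_I \to \prod_{j : P_j = I} \Z/q_j\Z$, which is multilinear with $|\cod(g_I)| \leq (\max_j q_j)^r = O_{\eps,k}(1)$, and put $G_I((c_j)_j, y) = \sum_{j : P_j = I} c_j \cdot h_j(y)$, which is a multilinear map $\cod(g_I) \times A_{[k-1]\setminus I} \to B$; then $\sum_{\emptyset \neq I \subset [k-1]} G_I(g_I(x_I), x_{[k-1]\setminus I}) = \sum_j F_j = F$, as desired (taking $G_I = 0$ when no $j$ has $P_j = I$). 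The computations are all routine and \Cref{thm:bias-inverse-theorem} does the real work; the only point requiring care is the Pontryagin-duality bookkeeping of the second step — in particular noticing that the factor depending on $x_k$ must be repackaged as an element of the torsion subgroup $B[q_j]$ so that the module action defining $G_I$ makes sense — but this presents no genuine obstacle.
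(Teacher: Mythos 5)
Your proposal is correct and follows essentially the same route as the paper: dualize via $A_k = \hat B$, apply \Cref{thm:bias-inverse-theorem}, peel off $x_k$ from each term by Pontryagin duality, and bundle the terms by the index set $I$. The only difference is cosmetic — you make the Pontryagin-duality step explicit by identifying the $x_k$-dependent factor with a multilinear map into the torsion subgroup $B[q_j]$ and writing $F_j$ via the $\Z/q_j\Z$-module action, whereas the paper states more tersely that $b(x_{[k-1]})$ depends on $x_I$ only through $\phi_1(x_I)$.
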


In the last section we give an application of \Cref{thm:bias-inverse-theorem} to the set all possible biases.
Let $\Phi_k$ be the set of all multilinear maps $\phi : A_1 \times \cdots \times A_k \to \torus$ (for any finite abelian groups $A_1, \dots, A_k$)
and let
\[
    B_k = \{ \bias(\phi) : \phi \in \Phi_k\}.
\]
We show that $B_k$ is a small subset of $[0, 1]$ in various senses; for example, all its limit points are algebraic.

Elsewhere we will give an application to probabilistically nilpotent finite groups.

\section{Basic properties of bias}

Several nice lemmas about bias over vector spaces were proved by Lovett in \cite{lovett}.
We need analogues in the category of abelian groups.

In this section we will sometimes consider functions $\phi : A_{[k]} \to \torus$ which are not multilinear,
but we still define $\bias(\phi)$ by \eqref{bias-def}.
If $\phi : A_{[k]} \to \torus$ is a map we write $\phi_{x_I} : A_{I^c} \to \torus$ for the map obtained by restriction:
\[
    \phi_{x_I}(x_{I^c}) = \phi(x).
\]
Obviously,
\begin{equation}
  \label{bias-recursion}
  \bias(\phi) = \E_{x_I} \bias(\phi_{x_I}).
\end{equation}
In particular, consider  \eqref{bias-recursion} in the case $I = \{i\}^c$.
If $\phi(x)$ is linear in $x_i$ then $\bias(\phi_{x_I})$ is 1 or 0 according to whether $\phi_{x_I} \equiv 0$,
so
\begin{equation}
  \label{bias-kernel-expression}
  \bias(\phi) = \P_{x_I} (\phi_{x_I} \equiv 0).
\end{equation}
This shows that, while in general $\bias(\phi)$ is complex-valued, $\bias(\phi) \in [0, 1]$ provided that $\phi$ is linear in at least one of its arguments.

\begin{lemma}
    \label{bias-trivial-bounds}
    Assume $\phi:A_{[k]} \to \torus$ is multilinear.
    For each $i \in [k]$,
    \[
      \bias(\phi) \geq 1 - \prod_{j \neq i} (1 - 1/|A_j|).
    \]
    If $\phi$ is nontrivial then
    \[
      \bias(\phi) \leq 1 - \prod_{j \neq i} (1 - 1/p_j),
    \]
    where $p_j$ is the smallest prime divisor of $|A_j|$.
\end{lemma}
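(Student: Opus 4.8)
The plan is to fix an index $i \in [k]$, use the fact that $\phi$ is linear in the variable $x_i$ to rewrite $\bias(\phi)$ as a vanishing probability, and then estimate that probability from below and from above by elementary means. Applying \eqref{bias-kernel-expression} with $I = \{i\}^c$ gives
\[
  \bias(\phi) = \P_{x_I}\br{\phi_{x_I} \equiv 0},
\]
with $x_I$ ranging uniformly over $A_I = \prod_{j \neq i} A_j$. For the lower bound, multilinearity forces $\phi(x) = 0$ as soon as some coordinate $x_j$ with $j \neq i$ vanishes, so $\phi_{x_I} \equiv 0$ for every such $x_I$; the set of these has density $1 - \prod_{j \neq i}(1 - 1/|A_j|)$ in $A_I$, which is exactly the first inequality.

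For the upper bound I would package the restrictions into a single map. By linearity in $x_i$, each $\phi_{x_I}$ is a homomorphism $A_i \to \torus$, and $x_I \mapsto \phi_{x_I}$ defines a multilinear map $\psi \colon A_I \to \hat{A_i}$ which is nonzero precisely when $\phi$ is nontrivial. So it suffices to prove the following: \emph{if $F \colon \prod_{j \in J} A_j \to B$ is a nontrivial multilinear map of finite abelian groups, then $\P(F \neq 0) \geq \prod_{j \in J}(1 - 1/p_j)$, where $p_j$ is the least prime dividing $|A_j|$.} I would argue by induction on $|J|$, the case $|J| = 0$ being immediate. For the inductive step, pick $\ell \in J$, write $F(x) = F_{x_\ell}(x_{J \setminus \{\ell\}})$ for the restriction of the $\ell$-th variable, and condition on $x_\ell$: whenever $F_{x_\ell} \not\equiv 0$ the inductive hypothesis applies and gives $\P(F_{x_\ell} \neq 0) \geq \prod_{j \in J \setminus \{\ell\}}(1 - 1/p_j)$, so
\[
  \P(F \neq 0) \;\geq\; \P_{x_\ell}\br{F_{x_\ell} \not\equiv 0}\cdot\prod_{j \in J \setminus \{\ell\}}\br{1 - 1/p_j}.
\]
Now $x_\ell \mapsto F_{x_\ell}$ is a group homomorphism from $A_\ell$ into the finite abelian group of all maps $\prod_{j \in J \setminus \{\ell\}} A_j \to B$ (additivity in $x_\ell$ is precisely what is used here), and it is nonzero since $F \neq 0$; hence its kernel is a proper subgroup of $A_\ell$, so of index at least $p_\ell$, giving $\P_{x_\ell}(F_{x_\ell} \not\equiv 0) \geq 1 - 1/p_\ell$ and closing the induction. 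Applying the claim to $\psi$ with $J = \{\,j : j \neq i\,\}$ — whose codomain $\hat{A_i}$ contributes no primes — yields $\P_{x_I}(\phi_{x_I} \not\equiv 0) \geq \prod_{j \neq i}(1 - 1/p_j)$, which is the second inequality.

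All the content is in the upper bound, and the single idea driving it is the standard fact that a nonzero homomorphism out of a finite abelian group has kernel of index at least the smallest prime divisor of the group's order, used to peel off one variable at each stage of the recursion. I do not anticipate a real obstacle: once $\bias(\phi)$ is rewritten via \eqref{bias-kernel-expression} and $\psi$ is recognized as a multilinear map into an abelian group not involving $A_i$, the rest is elementary counting.
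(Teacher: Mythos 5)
Your proof is correct and follows essentially the same route as the paper: rewrite $\bias(\phi)$ via the kernel expression, get the lower bound by observing $\phi_{x_I}\equiv 0$ whenever some $x_j=0$, and get the upper bound by peeling off one variable at a time and using the fact that a nonzero homomorphism out of $A_\ell$ has kernel of index at least $p_\ell$. The paper does the induction directly on $\phi$ (on $k$, with a fixed $i$ and a separate variable $j$ being removed), while you first repackage the restrictions into a multilinear map $\psi\colon A_I \to \hat{A_i}$ and induct on a general claim about $\P(F\neq 0)$; the inductive step and the key group-theoretic input are the same.
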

\begin{proof}
    Let $I = \{i\}^c$. If $x_j = 0$ for any $j \in I$ then $\phi_{x_I} \equiv 0$, so from \eqref{bias-kernel-expression} we have
    \[
      1 - \bias(\phi) = \P(\phi_{x_I} \not\equiv 0) \leq \prod_{j\in I} \P(x_j \neq 0) = \prod_{j \neq i} (1 - 1/|A_j|).
    \]

    The second inequality is proved by induction.
    The case $k = 1$ is clear, because if $\phi$ is linear and nontrivial then $\bias(\phi) = 0$, and the right-hand side is also $0$.
    Let $k > 1$.
    By \eqref{bias-recursion} with $I = \{j\}$ we have
    \[
      1 - \bias(\phi) = \E_{x_j} (1 - \bias(\phi_{x_j})).
    \]
    Let $B_j = \{x_j \in A_j : \phi_{x_j} \equiv 0\}$.
    By induction if $x_j \notin B_j$ then
    \[
      1 - \bias(\phi_{x_j}) \geq \prod_{j' \neq i, j} (1-1/p_{j'}).
    \]
    Thus
    \[
      1 - \bias(\phi) \geq (1 - |B_j|/|A_j|) \prod_{j' \neq i, j} (1-1/p_{j'}).
    \]
    But $B_j$ is a subgroup of $A_j$, and proper since $\phi$ is nontrivial, so $|B_j| / |A_j| \leq 1/p_j$.
    This completes the induction.
\end{proof}

\begin{lemma}
    \label{main-term-bias-lemma}
    For $I \subset [k]$, let $\phi_I : A_I \to \torus$ be $|I|$-linear.
    Let $\phi : A_{[k]} \to \torus$ be the function
    \[
      \phi(x) = \sum_{I \subset [k]} \phi_I(x_I).
    \]
    Suppose $J \subset [k]$ is any subset such that $\phi_I = 0$ for $I \supsetneq J$.
    Then
    \[
      |\bias(\phi)| \leq \bias(\phi_J).
    \]
\end{lemma}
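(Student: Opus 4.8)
The plan is to argue by induction on
\[
  N \;=\; \#\{\, I \subseteq [k] : I \not\supseteq J \text{ and } \phi_I \neq 0 \,\},
\]
the number of nonzero summands of $\phi$ other than $\phi_J$ itself. (By hypothesis $\phi_I = 0$ whenever $I \supsetneq J$, so every nonzero $\phi_I$ has either $I = J$ or $I \not\supseteq J$.) If $N = 0$ then $\phi = \phi_J$ and there is nothing to prove: assuming $J \neq \emptyset$ — the case $J = \emptyset$, where $\phi$ is constant, being trivial — the map $\phi_J$ is linear in at least one argument, so $\bias(\phi_J) \in [0,1]$ by the remark after \eqref{bias-kernel-expression} and $|\bias(\phi)| = \bias(\phi_J)$.

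For the inductive step ($N \geq 1$) I would fix a nonzero term $\phi_{I_0}$ with $I_0 \not\supseteq J$, choose an index $j \in J \setminus I_0$ — possible precisely because $I_0 \not\supseteq J$ — and split $\phi = \rho + \psi$ according to whether a summand involves $x_j$: so $\rho = \sum_{I \not\ni j} \phi_I$ depends only on $x_{\{j\}^c}$ and retains $\phi_{I_0}$, while $\psi = \sum_{I \ni j} \phi_I$ is linear in $x_j$ and retains $\phi_J$ (since $j \in J$). For fixed $x_{\{j\}^c}$ the map $x_j \mapsto \psi(x)$ is a character of $A_j$, so $\E_{x_j} e(\psi(x))$ equals $1$ if $\psi_{x_{\{j\}^c}} \equiv 0$ and $0$ otherwise; hence
\[
  \bias(\phi) \;=\; \E_{x_{\{j\}^c}}\, e\bigl(\rho(x_{\{j\}^c})\bigr)\,\mathbf{1}\bigl[\psi_{x_{\{j\}^c}} \equiv 0\bigr],
\]
and taking absolute values and then using \eqref{bias-kernel-expression} for $\psi$,
\[
  |\bias(\phi)| \;\leq\; \P_{x_{\{j\}^c}}\bigl(\psi_{x_{\{j\}^c}} \equiv 0\bigr) \;=\; \bias(\psi).
\]

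It then remains to apply the inductive hypothesis to $\psi$. Viewing $\psi$ as $\sum_I (\psi)_I$, its component $(\psi)_I$ is $\phi_I$ when $j \in I$ and $0$ otherwise; thus $(\psi)_I = 0$ for every $I \supsetneq J$ (if $j \in I \supsetneq J$ then $(\psi)_I = \phi_I = 0$ by the original hypothesis) and $(\psi)_J = \phi_J$ (as $j \in J$), so $\psi$ satisfies the hypotheses of the lemma with the same set $J$; moreover every nonzero $(\psi)_I$ with $I \not\supseteq J$ contains $j$, so $I_0$ is excluded and the parameter of $\psi$ is at most $N - 1$. Finally $\bias(\psi) \in [0,1]$ because $\psi$ is linear in $x_j$, so the inductive hypothesis gives $\bias(\psi) = |\bias(\psi)| \leq \bias(\phi_J)$, and the chain $|\bias(\phi)| \leq \bias(\psi) \leq \bias(\phi_J)$ closes the induction.

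I expect the only point requiring care to be the bookkeeping in the last paragraph: checking that after peeling off the single variable $x_j$ the remainder $\psi$ really does fall under the inductive hypothesis — same distinguished set $J$, same vanishing condition above $J$, strictly fewer extraneous terms. The deliberate choice of $j$ inside $J \setminus I_0$ is exactly what makes all three hold at once (it keeps $\phi_J$ inside $\psi$ and pushes $\phi_{I_0}$ into $\rho$). Beyond that, the argument uses only the one-variable identities \eqref{bias-recursion} and \eqref{bias-kernel-expression} together with the fact, noted after \eqref{bias-kernel-expression}, that a multilinear-type map that is linear in one of its arguments has bias in $[0,1]$.
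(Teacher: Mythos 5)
Your proof is correct and is essentially the same as the paper's: the core step—splitting $\phi$ into terms containing a chosen index $j$ and those not, then using the triangle inequality together with the fact that $\E_{x_j} e(\psi)$ is a nonnegative indicator because $\psi$ is linear in $x_j$—matches the paper's calculation exactly. Where the paper compresses the iteration into ``Repeat for every $i \in J$'', you make the bookkeeping explicit by inducting on the number of extraneous nonzero summands and deliberately choosing $j \in J \setminus I_0$ to guarantee strict decrease while preserving both the vanishing condition above $J$ and the term $\phi_J$; this is a tidier presentation of the same argument rather than a genuinely different route.
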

\begin{proof}
Let $i \in [k]$.
Write
\[
  \phi = \phi_{i} + \phi_{i'},
\]
where
\begin{align*}
  \phi_i &= \sum_{I \subset [k], i \in I} \phi_I, \\
  \phi_{i'} &= \sum_{I \subset [k], i \notin I} \phi_I.
\end{align*}
Note that $\phi_i$ is linear in $x_i$ and $\phi_{i'}$ is indepenent of $x_i$, so
\[
    \bias(\phi) = \E_{x_{\{i\}^c}} \br{e(\phi_{i'}) \E_{x_i} e(\phi_i)}
\]
Hence by the triangle inequality
\[
    |\bias(\phi)| \leq \E_{x_{\{i\}^c}} \left| \E_{x_i} e(\phi_i) \right|.
\]
But since $\phi_i$ is linear in $x_i$,
$\E_{x_i} e(\phi_i) \geq 0$,
so we may drop the absolute value signs.
Hence
\[
  |\bias(\phi)| \leq \E_{x_{I \setminus i}}  \E_{x_i} e(\phi_i(x)) = \bias(\phi_i).
\]
Repeat for every $i \in J$.
\end{proof}

\begin{lemma}
\label{bias-subadditivity}
Let $\phi, \psi: A_{[k]} \to \torus$ be multilinear.
Then
\[
  \bias(\phi + \psi) \geq \bias(\phi) \bias(\psi).
\]
\end{lemma}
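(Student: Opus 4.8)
The plan is to induct on $k$ (the lemma only makes sense for $k \geq 1$, since $\bias$ need not even be real otherwise), peeling off the first variable via the recursion \eqref{bias-recursion}. The base case $k = 1$ is immediate: a linear map $\phi$ has $\bias(\phi) = 1$ or $0$ according to whether $\phi \equiv 0$, so the inequality merely records that $\phi \equiv 0$ and $\psi \equiv 0$ force $\phi + \psi \equiv 0$. For $k \geq 2$, applying \eqref{bias-recursion} with $I = \{1\}$ and using $(\phi + \psi)_{x_1} = \phi_{x_1} + \psi_{x_1}$, followed by the inductive hypothesis for the $(k-1)$-linear maps $\phi_{x_1}, \psi_{x_1} : A_{\{1\}^c} \to \torus$, gives
\[
    \bias(\phi + \psi) = \E_{x_1} \bias(\phi_{x_1} + \psi_{x_1}) \;\geq\; \E_{x_1}\bigl[\bias(\phi_{x_1})\,\bias(\psi_{x_1})\bigr].
\]
(These inner biases are genuine elements of $[0,1]$ because $\phi_{x_1}$ and $\psi_{x_1}$ are linear in at least one argument.) The lemma is therefore reduced to the inequality $\E_{x_1}\bigl[\bias(\phi_{x_1})\bias(\psi_{x_1})\bigr] \geq \bias(\phi)\bias(\psi)$.

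For this I would bring in two independent copies $u, v$ of the tuple of remaining variables and, writing $x \in A_{[k]}$ as $(x_1, x_{\{1\}^c})$, expand
\[
    \E_{x_1}\bigl[\bias(\phi_{x_1})\,\bias(\psi_{x_1})\bigr] = \E_{x_1}\,\E_{u}\,\E_{v}\; e\bigl(\phi(x_1, u) + \psi(x_1, v)\bigr).
\]
Since $\phi$ and $\psi$ are linear in the first coordinate, for fixed $u, v$ the map $x_1 \mapsto \phi(x_1, u) + \psi(x_1, v)$ is a homomorphism $A_1 \to \torus$, so the average of $e(\,\cdot\,)$ over $x_1$ equals $1$ if this homomorphism is identically zero and $0$ otherwise; hence the displayed quantity equals $\P_{u, v}\bigl[\phi(x_1, u) + \psi(x_1, v) = 0 \text{ for all } x_1\bigr]$. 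This event contains the event ``$\phi(\,\cdot\,, u) \equiv 0$ and $\psi(\,\cdot\,, v) \equiv 0$'', whose probability --- because $u$ and $v$ are independent --- factors as $\P_{u}[\phi(\,\cdot\,, u) \equiv 0]\cdot\P_{v}[\psi(\,\cdot\,, v) \equiv 0]$, and by \eqref{bias-kernel-expression} with $I = \{1\}^c$ this is exactly $\bias(\phi)\bias(\psi)$.

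The only real obstacle is the middle step $\E_{x_1}[\bias(\phi_{x_1})\bias(\psi_{x_1})] \geq \bias(\phi)\bias(\psi)$: this is not formal, because for arbitrary $[0,1]$-valued functions $u, v$ on $A_1$ one can easily have $\E uv < (\E u)(\E v)$. The trick is to destroy the spurious coupling between $\phi$ and $\psi$ by giving each its own independent copy of the tail variables; once decorrelated, the estimate is just the observation that a single term of a sum of nonnegative numbers is at most the whole sum. Everything else is bookkeeping with \eqref{bias-recursion} and \eqref{bias-kernel-expression}.
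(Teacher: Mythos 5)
Your proof is correct, but it takes a genuinely different route from the one in the paper. The paper's proof is not inductive: it starts from $\bias(\phi)\bias(\psi) = \E_{x,y}\, e(\phi(x)+\psi(y))$, performs the substitution $y \mapsto x+y$, expands $\psi(x+y)$ multilinearly, and then invokes \Cref{main-term-bias-lemma} with $J=[k]$ to show that for each fixed $y$ the inner expectation over $x$ is dominated in modulus by the ``main term'' $\E_x e(\phi(x)+\psi(x))$; the triangle inequality then finishes. Your argument instead inducts on the number of variables, peels off $x_1$ via \eqref{bias-recursion}, and reduces the whole lemma to the single inequality $\E_{x_1}\bigl[\bias(\phi_{x_1})\bias(\psi_{x_1})\bigr] \geq \bias(\phi)\bias(\psi)$, which you settle by passing through \eqref{bias-kernel-expression}: the left side becomes $\P_{u,v}\bigl[\phi(\cdot,u)+\psi(\cdot,v)\equiv 0\bigr]$ with $u,v$ independent, which manifestly contains the product event $\{\phi(\cdot,u)\equiv 0\}\cap\{\psi(\cdot,v)\equiv 0\}$. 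This avoids \Cref{main-term-bias-lemma} entirely and is arguably more elementary --- the decoupling is achieved by the independent copies $u,v$ rather than by the translation trick --- at the modest cost of an induction on $k$. Both proofs are about the same length and both rely on the same underlying positivity (\eqref{bias-kernel-expression}), so neither is clearly superior, but yours is a clean alternative.
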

\begin{proof}
Let $x, y \in A_{[k]}$ be independent.
Then
\[
  \bias(\phi) \bias(\psi)
  = \E_{x,y} e(\phi(x) + \psi(y)) = \E_{x,y} e(\phi(x) + \psi(x + y)).
\]
We may expand
\[
  \psi(x + y) = \sum_{I \subset[k]} \psi_I(x_I, y_{I^c}),
\]
where $\psi_I : A_I \times A_{I^c} \to \torus$ is again $k$-linear.
Note that
$\psi_{[k]} = \psi(x)$.
For each fixed $y \in A_{[k]}$, the previous lemma with $J = [k]$ implies that
\[
  |\E_x e(\phi(x) + \psi(x + y))|
  \leq \E_x e(\phi(x) + \psi(x))
  = \bias(\phi + \psi).
\]
Hence the lemma follows from the triangle inequality.
\end{proof}

Suppose $\phi : A_{[k]} \to \torus$ is $k$-linear
and $\psi : B_{[l]} \to \torus$ is $l$-linear.
We say $\phi$ \emph{factors through} $\psi$ if
there is an $l$-partition
\[
  [k] = I_1 \cup \cdots \cup I_l \qquad (I_1, \dots, I_l \neq \emptyset)
\]
and for each $j \in [l]$ an $|I_j|$-linear map
\[
  \phi_j : A_{I_j} \to B_j
\]
such that $\phi$ factors as
\[
  \phi = \psi(\phi_{I_1}, \dots, \phi_{I_l}),
\]
i.e.,
\[
  \phi : A_{[k]}
  \cong \prod_j A_{I_j}
  \xlongrightarrow{(\phi_{I_j})_j}
  B_{[l]}
  \xlongrightarrow{\psi}
  \torus.
\]

\begin{lemma}
    \label{bias-factor-lemma}
    Suppose $\phi : A_{[k]} \to \torus$ and $\psi : B_{[l]} \to \torus$ are multilinear and $\phi$ factors through $\psi$. Then $\bias(\phi) \geq \bias(\psi)$.
    \end{lemma}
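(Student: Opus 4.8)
The plan is to peel the partition apart one block at a time: at each stage I collapse one block $A_{I_j}$ down to the single group $B_j$, replacing the component map $\phi_{I_j}$ by the identity of $B_j$, and I show that the bias does not decrease, until only $\psi$ itself is left. Everything rests on the following elementary claim: if $g : A_{[m]} \to \torus$-target... more precisely, if $g : A_{[m]} \to C$ is multilinear with $m \geq 1$, if $h : C \times D_{[n]} \to \torus$ is multilinear, and if $\phi(a,d) = h(g(a), d)$, then $\bias(\phi) \geq \bias(h)$. To prove the claim, fix $d \in D_{[n]}$. Since $h$ is linear in its first slot, $c \mapsto h(c,d)$ is a character $\chi_d \in \hat C$, and $\chi_d \circ g : A_{[m]} \to \torus$ is again multilinear; as $m \geq 1$ it is linear in at least one argument, so by the remark following \eqref{bias-kernel-expression} we have $\E_a\, e(\chi_d(g(a))) = \bias(\chi_d \circ g) \in [0,1]$, and it equals $1$ whenever $\chi_d$ is trivial (then $\chi_d \circ g \equiv 0$). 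On the other hand $\E_{c \in C}\, e(\chi_d(c))$ is $1$ if $\chi_d$ is trivial and $0$ otherwise, so $\E_a\, e(\chi_d(g(a))) \geq \E_{c \in C}\, e(\chi_d(c))$ for each fixed $d$; both sides being real, we may average over $d$, and \eqref{bias-recursion} then gives $\bias(\phi) \geq \bias(h)$.

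Granting the claim, the lemma follows by iteration. Write $\phi = \psi(\phi_{I_1}, \dots, \phi_{I_l})$. Applying the claim with $g = \phi_{I_1}$ and with $h$ the multilinear map $(b_1, x_{I_2}, \dots, x_{I_l}) \mapsto \psi(b_1, \phi_{I_2}(x_{I_2}), \dots, \phi_{I_l}(x_{I_l}))$ on $B_1 \times A_{I_2} \times \cdots \times A_{I_l}$, we get $\bias(\phi) \geq \bias(h)$, and $h$ again factors through $\psi$ — via the partition whose blocks are the new $B_1$-coordinate together with $I_2, \dots, I_l$, with component maps $\mathrm{id}_{B_1}, \phi_{I_2}, \dots, \phi_{I_l}$. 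Repeating this $l$ times in all, collapsing $I_1, \dots, I_l$ in turn (and at each step relabelling the factors so that the block being collapsed comes first, which is harmless since $\bias$ is plainly unchanged by permuting the factors), we successively replace every $A_{I_j}$ by $B_j$, and after the $l$-th step we are left with $\psi$ itself. Hence $\bias(\phi) \geq \dots \geq \bias(\psi)$.

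I do not expect a genuine obstacle here; the points needing a little care are all bookkeeping — verifying that each intermediate map is multilinear and still factors through $\psi$ through a partition into \emph{nonempty} parts, and that permutation-invariance of $\bias$ really does let the claim be applied to any block — together with the single substantive input used, namely that the bias of a map linear in at least one argument is real and lies in $[0,1]$ (the remark after \eqref{bias-kernel-expression}), which is exactly what lets the pointwise inequality inside the claim be averaged.
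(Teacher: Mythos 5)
Your proof is correct, and it takes a genuinely different route from the paper. The paper's proof considers the translated maps $\phi_b = \psi(\phi_{I_1}+b_1,\dots,\phi_{I_l}+b_l)$ for $b \in B_{[l]}$, observes that $\E_b \bias(\phi_b) = \bias(\psi)$, and then applies \Cref{main-term-bias-lemma} with $J=[k]$ to get $|\bias(\phi_b)| \leq \bias(\phi_0) = \bias(\phi)$ uniformly in $b$. You instead prove a one-block-collapse claim directly using duality: fixing the remaining variables $d$, the inner average $\E_a\, e(h(\phi_{I_j}(a),d))$ is the bias of the multilinear map $\chi_d \circ \phi_{I_j}$ (which lies in $[0,1]$ by the remark after \eqref{bias-kernel-expression} and equals $1$ when $\chi_d$ is trivial), and this dominates $\E_c\, e(\chi_d(c)) \in \{0,1\}$ pointwise; averaging and iterating over the $l$ blocks gives the result. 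Your argument avoids \Cref{main-term-bias-lemma} entirely, relying only on the nonnegativity remark after \eqref{bias-kernel-expression}, so it is more self-contained; the paper's version is shorter given that \Cref{main-term-bias-lemma} is already in hand and is also used elsewhere. Both approaches use the same underlying positivity fact, just organized differently: you compare to the uniform distribution on each $B_j$ one block at a time, while the paper averages a global translate. Your bookkeeping about nonempty blocks ($m=|I_j|\geq 1$) and permuting factors is handled correctly.
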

    \begin{proof}
    Suppose $[k] = I_1 \cup \cdots \cup I_l$ and
    \[
      \phi = \psi(\phi_{I_1}, \dots, \phi_{I_l}).
    \]
    For $b \in B_{[l]}$, let
    \[
      \phi_b = \psi(\phi_{I_1} + b_1, \dots, \phi_{I_l} + b_l).
    \]
    Clearly
    \[
        \E_b \bias(\phi_b) = \bias(\psi).
    \]
    By Lemma~\ref{main-term-bias-lemma}, for every $b \in B_{[l]}$ we have
    \[
      |\bias(\phi_b)| \leq \bias(\phi_0) = \bias(\phi).
    \]
    Hence $\bias(\psi) \leq \bias(\phi)$.
\end{proof}

\begin{corollary}
    Suppose $\phi : A_{[k]} \to \torus$ and $\psi : B_{[l]} \to \torus$ are multilinear and $\phi$ factors through $\psi$.
    Then $\bias(\phi) \geq 1 - \prod_{j \neq i} (1 - 1/|B_j|)$ for each $i \in [l]$.
\end{corollary}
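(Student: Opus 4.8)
The plan is simply to chain together the two preceding results, so the proof will be a two-line deduction. First I would invoke \Cref{bias-factor-lemma}: since $\phi$ factors through $\psi$ in exactly the sense defined just before that lemma, we immediately get $\bias(\phi) \geq \bias(\psi)$. It therefore suffices to bound $\bias(\psi)$ from below. But $\psi : B_{[l]} \to \torus$ is itself multilinear, so \Cref{bias-trivial-bounds} applies verbatim to $\psi$ in place of $\phi$ and gives $\bias(\psi) \geq 1 - \prod_{j \neq i}(1 - 1/|B_j|)$ for every $i \in [l]$. Concatenating the two inequalities yields the corollary.

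There is no genuine obstacle here; the only points worth verifying are that the hypotheses of both lemmas are met — that $\psi$ is multilinear (given) and that the "factors through" relation of the corollary coincides with the one used in \Cref{bias-factor-lemma} (it does, by the definition immediately preceding that lemma). It may be worth remarking that the estimate is sharp as stated: taking $k = l$, $I_j = \{j\}$, and each $\phi_j$ the identity shows $\phi = \psi$ is itself an instance, so the corollary is exactly the transport of \Cref{bias-trivial-bounds} along the factoring relation and cannot be improved in general.
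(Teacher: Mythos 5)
Your proof is correct and is precisely the paper's own argument: apply \Cref{bias-factor-lemma} to get $\bias(\phi)\geq\bias(\psi)$, then apply \Cref{bias-trivial-bounds} to $\psi$. Nothing further to add.
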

\begin{proof}
    Combine the previous lemma with \Cref{bias-trivial-bounds}.
\end{proof}

Recall that $m_q$ denotes the map $\Z/q \times \Z/q\Z \to \torus$ defined by $m(x, y) = xy / q \bmod 1$,
where $q$ is a prime power.

\begin{corollary}
    \label{cor:low-rank-implies-bias}
    Suppose $\phi : A_{[k]} \to \torus$ is the sum of maps $\phi_1, \dots, \phi_r$ and $\phi_i$ factors through $m_{q_i}$.
    Then
    \[
        \bias(\phi) \geq q_1^{-1} \cdots q_r^{-1}.
    \]
\end{corollary}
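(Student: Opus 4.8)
The plan is to assemble this directly from the two multiplicativity lemmas just proved, with one small computation at the end. First I would apply \Cref{bias-subadditivity} repeatedly to the decomposition $\phi = \phi_1 + \cdots + \phi_r$, peeling off one summand at a time, to obtain
\[
  \bias(\phi) \geq \bias(\phi_1) \cdots \bias(\phi_r).
\]
For this step to be legitimate I should first note that each $\phi_i$ is itself multilinear: it is the composite of two multilinear maps $A_I \to \Z/q_i\Z$ and $A_{I^c} \to \Z/q_i\Z$ with the bilinear map $m_{q_i}$, and such a composite is multilinear in the original variables, so \Cref{bias-subadditivity} applies at each stage.

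Next, for each $i$ the map $\phi_i$ factors through $m_{q_i}$ in the sense of the factoring definition preceding \Cref{bias-factor-lemma} (with $l = 2$), so that lemma gives $\bias(\phi_i) \geq \bias(m_{q_i})$. It therefore remains only to evaluate $\bias(m_q)$ for a prime power $q$. Straight from the definition,
\[
  \bias(m_q) = \E_{x, y \in \Z/q\Z} e(xy/q),
\]
and averaging over $y$ first shows $\E_y e(xy/q)$ equals $1$ when $x = 0$ and $0$ otherwise, so $\bias(m_q) = \P_{x \in \Z/q\Z}(x = 0) = 1/q$. (Alternatively this drops out of \eqref{bias-kernel-expression}, since $m_q$ is linear in each variable, or out of the preceding corollary applied with $\psi = m_{q_i}$, which gives $\bias(\phi_i) \geq 1 - (1 - 1/q_i) = 1/q_i$.) Combining the three displays yields
\[
  \bias(\phi) \geq \prod_{i=1}^r \bias(m_{q_i}) = \prod_{i=1}^r q_i^{-1} = q_1^{-1} \cdots q_r^{-1},
\]
which is the claim.

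There is no real obstacle here: the statement is a two-line corollary of \Cref{bias-subadditivity} and \Cref{bias-factor-lemma}. The only points meriting a moment's care are the verification of multilinearity of the summands $\phi_i$ (so that subadditivity may be invoked) and settling on one of the several equivalent ways to see $\bias(m_q) = 1/q$.
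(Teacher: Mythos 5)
Your proposal is correct and essentially matches the paper's one-line proof, which combines \Cref{bias-subadditivity} with the unnamed corollary immediately preceding the statement (the one giving $\bias(\phi_i) \geq 1 - (1 - 1/q_i) = 1/q_i$ for a map factoring through $m_{q_i}$). You instead invoke \Cref{bias-factor-lemma} directly and compute $\bias(m_q) = 1/q$ by hand, but you note the equivalent shortcut parenthetically, so the argument is the same in substance.
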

\begin{proof}
    Combine the previous corollary with \Cref{bias-subadditivity}.
\end{proof}

\begin{corollary}
    \label{cor:restriction}
    Suppose $A'_i \leq A_i$ for each $i$.
    Let $\phi : A_{[k]} \to \torus$ be multilinear and let $\phi'$ be the restriction of $\phi$ to $A'_{[k]}$.
    Then $\bias(\phi') \geq \bias(\phi)$.
\end{corollary}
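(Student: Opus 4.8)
The plan is to restrict the subgroups one coordinate at a time and reduce to a one-variable computation. Along the chain
\[
  A_{[k]} = A_1 \times \cdots \times A_k \supseteq A'_1 \times A_2 \times \cdots \times A_k \supseteq A'_1 \times A'_2 \times A_3 \times \cdots \times A_k \supseteq \cdots \supseteq A'_{[k]}
\]
each group is a product of subgroups of the $A_i$, and the restriction of $\phi$ to each of them is again multilinear, so it suffices to prove the claim in the special case where $A'_i = A_i$ for all $i \neq 1$ and $A'_1 \leq A_1$ is arbitrary; the general statement then follows by composing the resulting inequalities along the chain (at the $j$th step the coordinate being shrunk plays the role of ``coordinate $1$'', and the other ambient groups are whatever subgroups have already been reached).

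In this reduced case I would apply the recursion \eqref{bias-recursion} with $I = \{1\}^c$. Since the coordinates in $I$ are unchanged we have $A_I = A'_I$, and
\[
  \bias(\phi) = \E_{x_I \in A_I} \bias(\phi_{x_I}), \qquad \bias(\phi') = \E_{x_I \in A_I} \bias((\phi')_{x_I}).
\]
For each fixed $x_I$ the map $\phi_{x_I} : A_1 \to \torus$ is linear, and $(\phi')_{x_I} : A'_1 \to \torus$ is simply its restriction to the subgroup $A'_1 \leq A_1$. As noted just after \eqref{bias-kernel-expression}, the bias of a linear map equals $1$ if the map vanishes identically and $0$ otherwise; hence $\bias(\phi_{x_I})$ is $1$ exactly when $\phi_{x_I}$ vanishes on all of $A_1$, while $\bias((\phi')_{x_I})$ is $1$ exactly when $\phi_{x_I}$ vanishes merely on $A'_1$. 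The first condition implies the second, so $\bias((\phi')_{x_I}) \geq \bias(\phi_{x_I})$ for every $x_I \in A_I$. Averaging over $x_I$ gives $\bias(\phi') \geq \bias(\phi)$, as desired.

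I do not anticipate a real obstacle: everything needed is already available in this section, and the argument is elementary. The only point demanding a little care is the bookkeeping in the reduction — verifying that every intermediate group in the chain is a genuine product of subgroups of the original factors, so that the single-coordinate step applies verbatim after relabeling — but this is purely formal.
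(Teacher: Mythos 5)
Your proof is correct, but it takes a genuinely different (and more elementary) route than the paper's. The paper observes that $\phi'$ factors through $\phi$ in the sense defined before \Cref{bias-factor-lemma} — using the singleton partition $I_j = \{j\}$ and the inclusion maps $A'_j \hookrightarrow A_j$ as the components $\phi_j$ — and then cites \Cref{bias-factor-lemma} directly, which in turn rests on \Cref{main-term-bias-lemma}. Your argument bypasses that machinery entirely: you shrink one coordinate at a time and, for the single-coordinate step, use only \eqref{bias-recursion} together with the observation (already recorded after \eqref{bias-kernel-expression}) that the bias of a linear map is the indicator of its vanishing, which is monotone under restriction of the domain. What the paper's approach buys is uniformity — it reduces this corollary to a lemma that is also doing work elsewhere — whereas your approach is more self-contained and makes the underlying reason transparent without invoking the triangle-inequality argument of \Cref{main-term-bias-lemma}. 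Both are valid; yours is arguably the more illuminating proof of this particular statement.
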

\begin{proof}
    Since $\phi'$ factors through $\phi$, we may apply \Cref{bias-factor-lemma} to $(\phi', \phi)$.
\end{proof}

The following lemma,
which gives a little more information than the second part of Lemma~\ref{bias-trivial-bounds},
illustrates the utility of the last corollary.

\begin{lemma}
    \label{bias-exponent-lemma}
    Suppose $\phi : A_{[k]} \to \torus$ is multilinear
    and suppose the image of $\phi$ has an element of order $q = p^n$ for some prime $p$.
    Then $\bias(\phi) \leq \bias(\psi)$, where $\psi : (\Z/q\Z)^k \to \torus$ is defined by
    \[
        \psi(x_1, \dots, x_k) = x_1 \cdots x_k / q \pmod 1.
    \]
    In particular, for $k \geq 2$,
    \[
      \bias(\phi) \leq (n+1)^{k-2} / q.
    \]
\end{lemma}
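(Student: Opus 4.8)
The two inequalities are essentially independent, and for the first the work is done by \Cref{cor:restriction}. I would choose $a = (a_1, \dots, a_k) \in A_{[k]}$ with $\phi(a)$ of order exactly $q$ and let $\phi'$ be the restriction of $\phi$ to $\langle a_1 \rangle \times \cdots \times \langle a_k \rangle$, so that $\bias(\phi') \geq \bias(\phi)$ by \Cref{cor:restriction}. Set $q_i = |\langle a_i \rangle|$ and identify $\langle a_i \rangle$ with $\Z/q_i\Z$ via $a_i \leftrightarrow 1$; then multilinearity gives $\phi'(x) = x_1 \cdots x_k\, c$ with $c = \phi(a)$, and $q_i a_i = 0$ forces $q_i c = 0$, i.e.\ $q \mid q_i$. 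Writing $c = b/q$ with $\gcd(b, q) = 1$ and noting that the summand below depends on $x$ only through its residues modulo $q$ (each residue class in $\Z/q\Z$ having exactly $q_i/q$ preimages in $\Z/q_i\Z$), the change of variable $x_1 \mapsto b^{-1} x_1$ modulo $q$ gives
\[
  \bias(\phi') = \frac{1}{q_1 \cdots q_k} \sum_{x} e\!\left( \frac{b\, x_1 \cdots x_k}{q} \right) = \frac{1}{q^k} \sum_{r \in (\Z/q\Z)^k} e\!\left( \frac{r_1 \cdots r_k}{q} \right) = \bias(\psi).
\]
Hence $\bias(\phi) \leq \bias(\psi)$, which is the first assertion.

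It then remains to bound $\bias(\psi)$, which depends only on $p$, $n$, $k$. For $j \geq 1$ and $m \geq 0$ write $\beta_j(m)$ for the bias of the $j$-linear map $(\Z/p^m\Z)^j \to \torus$ sending $(y_1, \dots, y_j)$ to $y_1 \cdots y_j / p^m$, so $\bias(\psi) = \beta_k(n)$. Conditioning on the last coordinate and repeating the fibre/change-of-variable step (for fixed $x_k$ the coordinates $x_1, \dots, x_{k-1}$ enter only through their residues modulo $p^n / \gcd(x_k, p^n)$, and the resulting unit is absorbed) yields, for $k \geq 2$,
\[
  \beta_k(n) = \E_{x \in \Z/p^n\Z}\, \beta_{k-1}(\nu(x)), \qquad \text{where } p^{\nu(x)} = p^n / \gcd(x, p^n) \text{ and } \beta_{k-1}(0) := 1.
\]
Since $p^i - p^{i-1}$ elements $x$ have $\nu(x) = i$ for $1 \le i \le n$, and the single element $x = 0$ has $\nu(x) = 0$, this reads
\[
  \beta_k(n) = \frac{1}{p^n} \left( \sum_{i=1}^n (p^i - p^{i-1})\, \beta_{k-1}(i) + 1 \right),
\]
and the elementary fact that $\sum_{x_1} e(x_1 x_2 / p^n)$ equals $p^n$ when $p^n \mid x_2$ and $0$ otherwise gives the base value $\beta_2(n) = 1/p^n$.

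I would finish by proving $\beta_k(n) \le (n+1)^{k-2}/p^n$ for all $k \ge 2$ by induction on $k$. The case $k = 2$ is the equality just computed. For $k \ge 3$, inserting the inductive bound $\beta_{k-1}(i) \le (i+1)^{k-3}/p^i$ into the recursion, using $(p^i - p^{i-1})/p^i \le 1$, and reindexing $j = i + 1$ gives
\[
  \beta_k(n) \le \frac{1}{p^n} \left( \sum_{i=1}^n (i+1)^{k-3} + 1 \right) = \frac{1}{p^n} \sum_{j=1}^{n+1} j^{k-3} \le \frac{(n+1) \cdot (n+1)^{k-3}}{p^n} = \frac{(n+1)^{k-2}}{p^n},
\]
which is the claimed bound with $q = p^n$.

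The argument is entirely elementary; the only points that need attention are keeping the recursion honest — it holds for $k \ge 2$ but $k = 1$ must not be substituted into its right-hand side, which is why the induction is anchored at $k = 2$, where moreover the bound is tight — and checking that the fibre-counting and change-of-variable manipulations are unaffected by prime factors of the $q_i$ other than $p$, which holds because only residues modulo $q$ enter and reduction $\Z/q_i\Z \to \Z/q\Z$ has uniform fibres.
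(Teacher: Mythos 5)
Your proof is correct. For the first inequality $\bias(\phi) \leq \bias(\psi)$, your argument is essentially the one in the paper: restrict to $\langle a_1\rangle \times \cdots \times \langle a_k\rangle$ via \Cref{cor:restriction}, observe that the restricted map is a unit times $x_1 \cdots x_k / q$ and depends only on the residues modulo $q$, and pass to the quotient (which preserves bias) plus a change of variable. The paper phrases this slightly more abstractly (``$\phi$ factors through $\prod_i \langle a_i\rangle / \langle qa_i\rangle$'' and then ``$\phi$ is equivalent to $\psi$''), but the content is the same.

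For the second inequality you take a genuinely different route. The paper uses \eqref{bias-kernel-expression} to write $\bias(\psi) = \P(v_p(x_1) + \cdots + v_p(x_{k-1}) \geq n)$ and then a single union bound over the weak compositions $v_1 + \cdots + v_{k-1} = n$, each contributing $p^{-n}$; the count of such compositions, $M = \binom{n+k-2}{k-2}$, is crudely bounded by $(n+1)^{k-2}$. You instead set up the recursion $\beta_k(n) = \E_{x}\,\beta_{k-1}(\nu(x))$ by conditioning on the last variable, and prove the bound by induction on $k$. Both arguments are elementary and yield the same estimate. The paper's union bound is a one-shot argument and slightly shorter; your recursion has the modest advantage of giving an exact formula for $\bias(\psi)$ before any bounding is done (and the inequality in the $k=2$ base case is sharp, as you note). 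Your care with the $k=2$ anchoring and with the non-$p$ parts of the $q_i$ is appropriate and both points check out.
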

\begin{proof}
    Suppose $\phi(a_1, \dots, a_k)$ has order $q = p^n$.
    Let $A'_i = \langle a_i \rangle$ for each $i$.
    Then
    \[
      \bias(\phi) \leq \bias(\phi|_{A'_{[k]}}).
    \]
    Hence without loss of generality $A_i$ is generated by $a_i$ for each $i$.
    Note that $\phi_{qa_i} = 0$ for each $i$, so $\phi$ factors through $\prod_{i=1}^k \langle a_i\rangle / \langle q a_i\rangle$.
    Hence we may assume that $qa_i = 0$ for each $i$.
    Also note that $p^{n-1}a_i \neq 0$ for each $i$, since
    \[
      \phi(\dots, a_{i-1}, p^{n-1} a_i, a_{i+1}, \dots) = p^{n-1} \phi(a_1, \dots, a_k) \neq 0.
    \]
    Hence each $a_i$ has order $q$, so $\phi$ is equivalent to the map $\psi$ in the statement of the lemma.
    
    For the second statement we may assume $\phi = \psi$.
    Let $v_p$ denote the $p$-adic valuation.
    From \eqref{bias-kernel-expression},
    \begin{align*}
          \bias(\phi)
          &= \P(x_1 \cdots x_{k-1} = 0) \\
          &= \P(v_p(x_1) + \cdots + v_p(x_{k-1}) \geq n) \\
          &\leq \sum_{v_1 + \cdots + v_{k-1} = n} \P(v_p(x_i) \geq v_i ~ \text{for each} ~ i < k) \\
          &= \sum_{v_1 + \cdots + v_{k-1} = n} p^{-v_1-\cdots-v_{k-1}} \\
          &= M / q,
    \end{align*}
    where $M$ is the number of solutions to $v_1 + \cdots + v_{k-1} = n$ in nonnegative integers.
    Clearly $M \leq (n+1)^{k-2}$.
\end{proof}

\section{The structure theorem}

In this section we prove \Cref{thm:bias-inverse-theorem}.
Throughout we consider $k$ to be fixed and dependence on $k$ will not be explicit.
We start with some simple reductions.

Suppose $\phi : A_{[k]} \to \torus$ is multilinear and $\bias(\phi) \geq \eps$.
We may decompose each $A_i$ into its $p$-primary parts:
\[
  A_i = \bigoplus_p A_i^{(p)}.
\]
Then $\phi$ decomposes as
\[
  \phi = \sum_p \phi_p,
\]
where $\phi_p$ factors as
\[
  A_{[k]} \to A_1^{(p)} \times \cdots \times A_k^{(p)} \to \torus.
\]
The bias of $\phi$ correspondingly decomposes as
\[
  \bias(\phi) = \prod_p \bias(\phi_p).
\]
By \Cref{bias-trivial-bounds}, $\bias(\phi_p) \leq 1 - (1 - 1/p)^{k-1} \leq 1 - 1/2^{k-1}$ whenever $\phi_p$ is nontrivial,
so since $\bias(\phi) \geq \eps$ it follows that there are at most $\log \eps^{-1} / \log(1 - 1/2^{k-1})^{-1}$ primes such that $\phi_p \not \equiv 0$.
Hence it suffices to consider the $p$-primary parts separately, so without loss of generality $A_1, \dots, A_k$ are $p$-groups.

Let $K_i = \{a_i \in A_i : \phi_{a_i} \equiv 0\}$.
By replacing $A_i$ with $A_i / K_i$ we may assume $K_i = 0$,
so $\phi$ is nondegenerate in the sense that $\phi_{a_i} \not\equiv 0$ for each nonzero $a_i \in A_i$.
Then if $A_i$ has an element of order $q = p^n$, the image of $\phi$ also contains an element of order $q$, so by \Cref{bias-exponent-lemma},
\[
  \bias(\phi_p) \leq (n+1)^{k-2} / q \leq (\log_2 q + 1)^{k-2} / q.
\]
This implies that $q$ is bounded in terms of $\eps$ (and $k$).
Hence we may assume that $A_1, \dots, A_k$ have bounded exponent (and in particular $p$ is bounded).

\def\agemO{\mho}
We will now deduce \Cref{thm:bias-inverse-theorem} from \Cref{thm:AR-PR-field-case} using the following elementary lemma about extending multilinear maps.
For any abelian $p$-group, let $A[p]$ denote the subgroup $\{x \in A : p x = 0\}$ and let $pA$ denote the subgroup $\{px : x \in A\}$.

\begin{lemma}
    \label{lem:extension}
    Let $A_1, \dots, A_k$ be finite abelian $p$-groups and $q$ a power of $p$.
    \begin{enumerate}[(1)]
        \item (domain enlargement)
        Suppose $\phi : pA_1 \times A_{[2, k]} \to \Z/q\Z$ is a multilinear map.
        Assume $\phi(x) = 0$ whenever $px_i = 0$ for any $i > 1$.
        Then $\phi$ factors through $pA_1 \times \prod_{i \in [2, k]} A_i / A_i[p]$,
        and there is a multilinear map $\psi : A_{[k]} \to \Z/(pq)\Z$ extending $\phi$ in the sense that the following diagram commutes:
        \[
            \begin{tikzcd}
                & pA_1 \times \prod_{i \in [2, k]} A_i / A_i[p] \arrow{dr}{} \\
                pA_1 \times A_{[2, k]} \arrow{rr}{\phi} \arrow{ur}{} \arrow{d}{}
                && \Z/q\Z \arrow{d}{\times p} \\
                A_{[k]} \arrow[dashed]{rr}{\psi}
                && \Z/(pq)\Z
            \end{tikzcd}.
        \]
        \item (range enlargement)
        Suppose $\phi : A_{[k]} \to \Z/q\Z$ is a multilinear map such that $\phi(x) = 0$ whenever $p x_i = 0$ for any $i \in [k]$.
        Then $\phi$ factors through $\prod_{i \in [k]} A_i / A_i[p]$, and there is a multilinear map $\psi : A_{[k]} \to \Z/(pq)\Z$ such that $\psi \bmod q = \phi$,
        as in the following commutative diagram:
        \[
            \begin{tikzcd}
                & \prod_{i \in [k]} A_i / A_i[p] \ar{dr}{} \\
                A_{[k]} \ar{ur} \ar{rr}{\phi} \ar[dashed]{drr}{\psi}
                && \Z/q\Z \\
                && \Z/(pq)\Z \ar[swap]{u}{\bmod q}
            \end{tikzcd}.
        \]
        \item (extension of rank-one maps)
        Suppose $\phi : pA_1 \times A_{[2,k]} \to \torus$ is a multilinear map such that $\phi(x) = 0$ whenever $px_i = 0$ for any $i > 1$, and factoring through $m_q$.
        Then $\phi$ extends to a multilinear map $\psi : A_{[k]} \to \torus$ factoring through $m_{pq}$,
        as in the following commutative diagram:
        \[
            \begin{tikzcd}
                & pA_1 \times \prod_{i \in [2, k]} A_i / A_i[p] \arrow{dr}{} \\
                pA_1 \times A_{[2, k]} \arrow{rr}{\phi} \arrow{ur}{} \arrow{d}{}
                && (\Z/q\Z)^2 \arrow{r}{m_q}
                & \torus \\
                A_{[k]} \arrow[dashed]{rr}{\psi}
                && (\Z/(pq)\Z)^2 \arrow[swap]{ur}{m_{pq}}
            \end{tikzcd}.
        \]
    \end{enumerate} 
\end{lemma}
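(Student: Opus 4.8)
The three parts are tightly linked; the plan is to prove (2), deduce (1) from it, and bootstrap (3) from both. In each part the assertion that $\phi$ factors through $\prod_i A_i/A_i[p]$ (or $pA_1 \times \prod_{i>1} A_i/A_i[p]$) is immediate from multilinearity: if $\phi$ vanishes whenever $x_i \in A_i[p]$, then linearity in $x_i$ gives $\phi(\dots, x_i + y_i, \dots) = \phi(\dots, x_i, \dots)$ for $y_i \in A_i[p]$, so $\phi$ depends on $x_i$ only through its class in $A_i/A_i[p]$. The content of each part is therefore the construction of $\psi$.

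For (2) I would fix cyclic decompositions $A_i = \bigoplus_j \Z/p^{e_{ij}}\Z$ with generators $g_{ij}$ and use the standard identification of multilinear maps $A_{[k]} \to \Z/m\Z$ with homomorphisms out of $\bigotimes_i A_i \cong \bigoplus_{\vec\jmath} \Z/p^{\min_i e_{ij_i}}\Z$: such a map amounts to a free choice of values $c_{\vec\jmath} = \phi(g_{1j_1}, \dots, g_{kj_k})$ subject only to $p^{e_{ij_i}} c_{\vec\jmath} = 0$ for each $i$. The hypothesis of (2), tested with $x_i = p^{e_{ij_i}-1}g_{ij_i}$ (a generator of $(\Z/p^{e_{ij_i}}\Z)[p] \subseteq A_i[p]$), sharpens this to $p^{e_{ij_i}-1}c_{\vec\jmath} = 0$ in $\Z/q\Z$ for each $i$. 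Lifting each $c_{\vec\jmath}$ arbitrarily to $d_{\vec\jmath} \in \Z/pq\Z$, the only thing to verify is $p^{e_{ij_i}}d_{\vec\jmath} = 0$ in $\Z/pq\Z$ — a one-line $p$-adic valuation computation from the sharpened constraint, using $e_{ij_i} \ge 1$. Then $(d_{\vec\jmath})$ defines a multilinear $\psi : A_{[k]} \to \Z/pq\Z$ with $\psi \bmod q = \phi$.

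To obtain (1), apply (2) to the multilinear map $\bar\psi : A_{[k]} \to \Z/q\Z$ given by $\bar\psi(a_1, x_2, \dots, x_k) = \phi(pa_1, x_2, \dots, x_k)$, which vanishes whenever $px_i = 0$ for some $i > 1$ (from the hypothesis on $\phi$) and whenever $pa_1 = 0$ (as then $\bar\psi = \phi(0, \dots) = 0$). This yields $\psi : A_{[k]} \to \Z/pq\Z$ with $\psi \bmod q = \bar\psi$; since $\psi$ is linear in the first variable, $\psi(pa_1, x_2, \dots) = p\,\psi(a_1, x_2, \dots)$, and multiplying by $p$ removes the mod-$q$ ambiguity, so $\psi(pa_1, x_2, \dots) = p\,\phi(pa_1, x_2, \dots)$ — exactly the commutativity asserted in (1).

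For (3), write $\phi = m_q(\phi_1, \phi_2)$ for the given partition $[k] = I \sqcup I^c$, arranging that the distinguished coordinate (where the domain is $pA_1$) lies in $I$. First reduce to the case $\phi_1, \phi_2$ both surjective onto $\Z/q\Z$: if $\mathrm{Im}(\phi_2) = p^s\Z/q\Z$ then $\phi_2 = (\times p^s)\circ\phi_2'$ with $\phi_2'$ surjective onto $\Z/(q/p^s)\Z$ and $\phi = m_{q/p^s}(\phi_1 \bmod (q/p^s), \phi_2')$, and similarly for $\phi_1$; this replaces $q$ by a divisor $q'$. With surjectivity available, testing the vanishing hypothesis on $\phi$ against an $x_I$ with $\phi_1(x_I) = 1$ shows $\phi_2$ vanishes whenever $px_i = 0$ for some $i \in I^c$, and symmetrically $\phi_1$ vanishes whenever $px_i = 0$ for some $i \in I\setminus\{1\}$. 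Now apply (2) to $\phi_2$, getting $\psi_2 : A_{I^c} \to \Z/pq'\Z$ with $\psi_2 \bmod q' = \phi_2$, apply (1) to $\phi_1$ with $pA_1$ as its distinguished first variable, getting $\psi_1 : A_I \to \Z/pq'\Z$ with $\psi_1|_{pA_1 \times A_{I\setminus\{1\}}} = (\times p)\circ\phi_1$, and set $\psi := m_{pq'}(\psi_1, \psi_2)$; the identity $m_{pq'}(pa, b) = ab/q' \bmod 1 = m_{q'}(a \bmod q', b \bmod q')$ (for integer lifts $a,b$ of the arguments) then shows $\psi$ restricts to $\phi$ on $pA_1 \times A_{[2,k]}$. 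The routine work here is the $p$-adic and divisibility bookkeeping; the point to watch is that the vanishing hypotheses needed to invoke (1) and (2) seem genuinely to force the passage to the surjective quotients, so this approach produces $\psi$ factoring through $m_{pq'}$ for some divisor $q'$ of $q$ — still a bounded prime power, hence harmless for \Cref{thm:bias-inverse-theorem}, and equal to $m_{pq}$ when $\phi_1, \phi_2$ are already surjective.
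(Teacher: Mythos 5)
Your proof is correct, and it departs from the paper's route in two small but interesting ways.

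First, you derive (1) from (2) by precomposing $\phi$ with multiplication by $p$ on the first coordinate, rather than repeating the basis-and-lift computation that the paper does separately for (1) and (2). This is a clean observation: the map $\bar\psi(a_1, x_2, \dots, x_k) := \phi(pa_1, x_2, \dots, x_k)$ satisfies the hypothesis of (2) (it kills $A_1[p]$ trivially and kills $A_i[p]$ for $i > 1$ by assumption), and multiplying the resulting congruence $\psi \equiv \bar\psi \pmod q$ by $p$ removes the ambiguity, giving exactly the commutativity required in (1). The paper instead runs the generator argument twice, with a small extra wrinkle in (1) around the order of $e_{1j_1}$; your reduction avoids that second computation entirely.

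Second, for (3) you explicitly address a point the paper leaves implicit: if $\phi = m_q(\phi_1, \phi_2)$, the vanishing hypothesis on $\phi$ does not automatically transfer to $\phi_1$ and $\phi_2$ separately when one of them fails to be surjective onto $\Z/q\Z$ (since $\phi_1(x)\phi_2(y) = 0$ in $\Z/q\Z$ for all $y$ need not force $\phi_1(x) = 0$). The paper's proof of (3) applies (1) and (2) to $\phi_1, \phi_2$ directly, which is valid under the reading that "$\phi$ factors through $m_q$" means the induced map on $pA_1 \times \prod_{i>1} A_i/A_i[p]$ does — in which case the factors automatically vanish on the $A_i[p]$'s, and that is exactly the situation in the proof of \Cref{thm:bias-inverse-theorem}. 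Your surjectivity reduction handles the literal reading of the hypothesis, at the cost of landing on $m_{pq'}$ for a divisor $q'$ of $q$ rather than $m_{pq}$ itself; as you note, that relaxation is harmless for the downstream application, though strictly speaking it is a weaker conclusion than stated (a map factoring through $m_{pq'}$ need not factor through $m_{pq}$). If you want your version to match the lemma verbatim, you should record that the reduction is only needed under the literal reading, or restate the conclusion as "$m_{q''}$ for some prime power $q'' \leq pq$."

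Aside from these two divergences the arguments run in parallel: the core of (2) is the same tensor-product bookkeeping over a cyclic decomposition, lifting values on generators while respecting order constraints, and (3) is assembled from (1), (2), and the identity $m_{pq}(pa, b) = m_q(a, b)$ in both treatments.
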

\begin{proof}
    By the structure theorem for finite abelian groups, we may write
    \[
    A_i = \langle e_{i1} \rangle \oplus \cdots \oplus \langle e_{id_i} \rangle \qquad (1 \leq i \leq k),
    \]
    where each $e_{ij}$ has order some power of $p$.
    
    (1)
    Write
    \[
        \phi_{j_1 \cdots j_k} = \phi(p e_{1j_1}, e_{2j_2}, \dots, e_{kj_k}) \in \Z/q\Z.
    \]
    We may identify $\Z/q\Z$ with $p(\Z/(pq)\Z)$.
    For each $j_1, \dots, j_k$, let $\psi_{j_1 \cdots j_k} \in \Z/(pq)\Z$ be an arbitrary solution to
    \[
        p \psi_{j_1 \cdots j_k} = \phi_{j_1 \cdots j_k}.
    \]
    Suppose $i > 1$ and $e_{ij_i}$ has order $p^n$.
    By the assumption that $\phi$ factors through $pA_1 \times \prod_{i \in [2, k]} A_i / A_i[p]$,
    we must have $p^{n-1} \phi_{j_1\cdots j_k} = 0$.
    Hence $p^n \psi_{j_1\cdots j_k} = 0$.
    Similarly, if $e_{1j_1}$ has order $p^n$, then
    \[
        p^n \psi_{j_1\cdots j_k} = p^{n-1} \phi_{j_1 \cdots j_k} = \phi(p^n e_{1j_1}, e_{2j_2}, \dots, e_{kj_k}) = 0.
    \]
    Hence we may define $\psi : A_{[k]} \to \Z/(pq)\Z$ by linearly extending the definition
    \[
        \psi(e_{1j_1}, \dots, e_{k j_k}) = \psi_{j_1 \cdots j_k}.
    \]
    The maps $\psi |_{pA_1 \times A_{[2, k]}}$ and $p \phi$ agree on generators, so they are the same.
    
    (2)
    Write
    \[
        \phi_{j_1 \cdots j_k} = \phi(e_{1j_1}, e_{2j_2}, \dots, e_{kj_k}) \in \Z/q\Z.
    \]
    For each $j_1, \dots, j_k$, let $\psi_{j_1 \cdots j_k} \in \Z/(pq)\Z$ be an arbitrary solution to
    \[
        \psi_{j_1 \cdots j_k} \equiv \phi_{j_1 \cdots j_k} \pmod q.
    \]
    Suppose $p^n$ is the order of one of the generators $e_{ij_i}$.
    Then $p^{n-1} \phi_{j_1 \cdots j_k} = 0 \pmod q$, so $p^n \psi_{j_1 \cdots j_k} = 0 \pmod {pq}$.
    Hence we may define $\psi : A_{[k]} \to \Z/(pq)\Z$ by linearly extending the definition
    \[
        \psi(e_{1j_1}, \dots, e_{k j_k}) = \psi_{j_1 \cdots j_k}.
    \]
    Then $\psi \bmod q$ and $\phi$ agree on generators, so they are the same.
    
    (3)
    By assumption $\phi = m_q(\phi_1, \phi_2)$ where $\phi_1$ is defined on $pA_1 \times A_I$ for some $I \subsetneq [2,k]$ and $\phi_2$ is defined on $A_{[2, k] \setminus I}$.
    By (1), $\phi_1$ may be extended to $\psi_1 : A_1 \times A_I \to \Z/(pq)\Z$ in such a way that $\psi_1 |_{p A_1 \times A_I} = p\phi_1$.
    By (2), $\phi_2$ may be extended to $\psi_2 : A_{[2, k] \setminus I} \to \Z/(pq)\Z$ in such a way that $\psi_2 \bmod q = \phi_2$.
    Let $\psi = m_{pq}(\psi_1, \psi_2)$.
    Then for $x \in p A_1 \times A_{[2, k]}$ we have
    \[
        \psi(x) = \psi_1(x) \psi_2(x) / (pq) \bmod 1 = \phi_1(x) \phi_2(x) / q \bmod 1 = \phi(x),
    \]
    so $\psi$ extends $\phi$.
\end{proof}

We can now prove \Cref{thm:bias-inverse-theorem}.

\begin{proof}[Proof of \Cref{thm:bias-inverse-theorem}]
Let the exponent of $A_i$ be $p^{n_i}$.
We will prove the theorem by induction on $n_1 + \cdots + n_k$.
If $n_i = 1$ for each $i$ then each $A_i$ is elementary abelian,
hence a vector space over $\F_p$.
In this case the result follows from \Cref{thm:AR-PR-field-case}.
Hence assume $n_1 > 1$ without loss of generality.
Then $pA_1$ and $A_1 / p A_1$ both have smaller exponent than $A_1$.

The restriction $\phi_1$ of $\phi$ to $pA_1 \times A_{[2,k]}$ factors through a map $\phi_1'$ defined on $pA_1 \times \prod_{i \in [2, k]} A_i / A_i[p]$, as in the following diagram:
\[
    \begin{tikzcd}
        pA_1 \times A_{[2, k]} \ar{d}{} \ar{drr}{\phi_1} \ar{rr} && pA_1 \times \prod_{i \in [2, k]} A_i / A_i[p] \ar{d}{\phi_1'} \\
        A_{[k]} \ar{rr}{\phi} && \torus
    \end{tikzcd}.
\]
By \Cref{cor:restriction}, $\bias(\phi'_1) = \bias(\phi_1) \geq \eps$,
so by induction
\[
    \phi'_1 = \psi_1' + \cdots + \psi_r'
\]
for $r = O_\eps(1)$ and some
\[
    \psi_1', \dots, \psi_r' : pA_1 \times \prod_{i \in [2, k]} A_i / A_i[p] \to \torus
\]
where each $\psi_i'$ factors through $m_{q_i}$ for some $q_i \leq O_\eps(1)$.
By \Cref{lem:extension}, each $\psi'_i$ extends to a multilinear map $\psi_i : A_{[k]} \to \torus$
factoring through $m_{pq_i}$.
Let
\[
    \phi_2 = \phi - (\psi_1 + \cdots + \psi_r).
\]
Then $\phi_2$ also has bias bounded away from zero (by \Cref{bias-subadditivity} and \Cref{cor:low-rank-implies-bias}),
and $\phi_2$ factors through $(A_1 / pA_1) \times A_{[2,k]}$.
By induction again there are multilinear maps $\psi_{r+1}, \dots, \psi_{r_2} : (A_1/pA_1) \times A_{[2,k]} \to \torus$,
where $r_2 = O(1)$,
such that each $\psi_i$ factors through $m_q$ for some bounded $q$,
and such that
\[
  \phi_2 = \psi_{r+1} + \cdots \psi_{r_2}.
\]
Hence
\[
  \phi = \psi_1 + \cdots + \psi_{r_2},
\]
and the induction is complete.
\end{proof}

We end by deducing \Cref{cor:main-corollary}.

\begin{proof}[Proof of \Cref{cor:main-corollary}]
    Let $A_k = \hat B$ and let $\phi(x) = x_k(F(x_1, \dots, x_{k-1}))$.
    Then $\phi$ is multilinear and $\bias(\phi) = \P(f = 0) \geq \eps > 0$,
    so \Cref{thm:bias-inverse-theorem} implies that $\phi$ is the sum of $O_\eps(1)$ multilinear functions of the form
    \[
        \psi(x) = m_q( \phi_1(x_I), \phi_2(x_{I^c})),
    \]
    where $0 < |I| < k$ and $q \leq O_\eps(1)$.
    Assume without loss of generality that $k \in I^c$.
    Since $\psi$ is linear in $x_k$ and $\hat A_k \cong B$, there is some $b = b(x_{[k-1]}) \in B$ such that
    \[
        \psi(x) = x_k(b(x_{[k-1]})).
    \]
    Moreover, $b$ must be multilinear in $x_{[k-1]}$, and $b$ can depend on $x_I$ only through $\phi_1(x_I)$.
    Hence $\psi$ has the form
    \[
        \psi(x) = x_k(G(g(x_I), x_{[k-1] \setminus I})),
    \]
    where $g = \phi_1$.
    This implies that $F$ is the sum of $O_\eps(1)$ maps of the form $G(g(x_I), x_{[k-1] \setminus I})$.
    
    Now fix $I$ and suppose the terms of the form $G(g(x_I), x_{[k-1] \setminus I})$ are
    \[
        G_j(g_j(x_I), x_{[k-1] \setminus I}) \qquad (1 \leq j \leq n).
    \]
    Let $g = (g_1, \dots, g_n)$, and note that $|\cod(g)|$ is still bounded and $G_j(g_j(x_I), x_{[k-1] \setminus I}) = G'_j(g(x_I), x_{[k-1] \setminus I})$,
    where $G'_j$ is the composite of $G_j$ and the projection $\pi_j$.
    Let $G = G'_1 + \cdots + G'_n$. Then
    \[
        \sum_{j=1}^n G_j(g_j(x_I), x_{[k-1] \setminus I}) = G(g(x_I), x_{[k-1] \setminus I}).
    \]
    Hence $F$ has the claimed form.
\end{proof}

\section{Possible biases}

Let $\Phi_k$ be the set of all multilinear maps $\phi : A_{[k]} \to \torus$ for any finite abelian groups $A_1, \dots, A_k$.
In this section we probe the set of all possible biases
\[
    B_k = \{ \bias(\phi) : \phi \in \Phi_k \}.
\]
For example,
\begin{align*}
    B_1 = \{0, 1\}, &&
    B_2 = \{1 / n : n \geq 1\}.
\end{align*}
By \eqref{bias-kernel-expression}, $B_k \subset [0, 1]$.
We will show that \Cref{thm:bias-inverse-theorem} implies that $B_k$ is a small subset of $[0, 1]$ in various senses.
The results of this section are analogous to those proved in \cite{eberhard} essentially for $B_3$ and for the set of commuting probabilities of finite groups.

A slight generalization is useful. Call $\phi : A_{[k]} \to \torus$ \emph{multiaffine} if $\phi(x)$ is affine-linear in each variable $x_i$ separately.
Equivalently, $\phi$ is multiaffine if and only if it can be written
\[
    \phi = \sum_{I \subset [k]} \phi_I,
\]
where $\phi_I$ is a multilinear map $A_I \to \torus$.
The greatest $d = |I|$ such that $\phi_I$ is nontrivial is the \emph{degree} of $\phi$.
Let $\Phi_{k, d}$ be the set of all multiaffine maps $\phi:A_{[k]} \to \torus$ (for any $A_1, \dots, A_k$) of degree at most $d$ and zero constant term,
and let
\[
    B_{k, d} = \{ \bias(\phi) : \phi \in \Phi_{k,d} \}.
\]
Thus $B_{k, d}$ is a subset of the unit disc $D = \{z \in \C: |z| \leq 1\}$.

\begin{example}[Gauss sums]
    Let $A_1 = A_2 = A_3 = \F_p$, where $p$ is an odd prime, and let
    \[
        \phi(x, y, z) = (xy + xz + yz) / p \bmod 1 \qquad (x, y, z \in \F_p)
    \]
    Thus $\phi \in \Phi_{3, 2}$.
    Then
    \[
        \bias(\phi) = \E_{x, y, z \in \F_p}\, e((xy + xz + yz)/p) = p^{-2} \bar{G(p)},
    \]
    where $G(p)$ is the Gauss sum
    \[
        G(p) = \sum_{x \in \F_p} e(x^2 / p).
    \]
    It was proved by Gauss that $G(p) = p^{1/2}$ or $ip^{1/2}$ according to whether $p$ is 1 or 3 mod 4.
    Hence $B_{3,2}$ includes $i^{\pm (p-1)^2/4} p^{-3/2}$ for every odd prime $p$.
\end{example}

If $X$ is a subset of a compact metric space, let $X'$ be the set of limit points of $X$.
We iterate this operation transfinitely according to the following rules:
\begin{align*}
    &X^{(0)} = X, \\
    &X^{(\alpha+1)} = (X^{(\alpha)})' && (\alpha~\text{an ordinal}), \\
    &X^{(\lambda)} = \bigcap_{\alpha < \lambda} X^{(\alpha)} && (\lambda~\text{a nonzero limit ordinal}).
\end{align*}
If $X$ is countable then there is a countable ordinal $\beta$ such that $X^{(\beta)} = \emptyset$.
The smallest such $\beta$ must be a successor ordinal, say $\alpha+1$, and
this ordinal $\alpha$ is called the \emph{Cantor--Bendixson rank} of $X$ and denoted $\CB(X)$.
It is the unique ordinal such that $X^{(\alpha)}$ is finite and nonempty.%
\footnote{More often $\beta$ itself is called the Cantor--Bendixson rank, but the present usage is a common variant for countable sets.}

\begin{theorem}
The set $B_{k,d}$ has the following smallness properties.
\begin{enumerate}[(1)]
    \item\label{item:algebraic-limit-points} All limit points of $B_{k, d}$ are algebraic (in fact, rational linear combinations of roots of unity).
    \item\label{item:CB-bound} $\CB(B_{k, d}) \in \{1, \omega, \dots, \omega^{d-2}\}$ for $d \geq 2$.
\end{enumerate}
\end{theorem}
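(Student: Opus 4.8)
The plan is to bootstrap from \Cref{thm:bias-inverse-theorem}: first use it to force every $\phi$ with $|\bias(\phi)|$ bounded below into a rigid normal form, and then analyse the resulting finite–parameter family of biases as a subset of the disc.

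\emph{Step 1: a normal form for biased multiaffine maps.} Fix $\eps>0$ and suppose $\phi\in\Phi_{k,d}$ with $|\bias(\phi)|\ge\eps$. Writing $\phi=\sum_{I\subset[k]}\phi_I$, \Cref{main-term-bias-lemma} shows that every maximal $I$ with $\phi_I\neq0$ — in particular every top-degree part, which is $|I|$-linear — has $\bias(\phi_I)\ge|\bias(\phi)|\ge\eps$, so by \Cref{thm:bias-inverse-theorem} it is a sum of $O_\eps(1)$ multilinear maps factoring through $m_q$ with $q\le O_\eps(1)$. Feeding this into the recursion $\bias(\phi)=\E_{x_k}\bias(\phi_{x_k})$ (to handle the lower-degree parts by induction on the number of variables) and using \Cref{bias-trivial-bounds} to bound the exponents and primes that can occur, one extracts a finite list of \emph{templates} $\mathcal T=\mathcal T(\eps,k,d)$ such that for every such $\phi$ there is a $T\in\mathcal T$, multilinear maps $g_1,\dots,g_m$ with $g_s\colon A_{J_s}\to\Z/q_s\Z$ defined on at most $d-1$ of the coordinates and all $q_s\le O_\eps(1)$, and a fixed map $\beta_T\colon\prod_s\Z/q_s\Z\to\torus$ (assembled from the maps $m_{q_s}$ prescribed by $T$), with $\phi(x)=\beta_T\!\left(g_1(x_{J_1}),\dots,g_m(x_{J_m})\right)$. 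Hence
\[
  \bias(\phi)=\sum_{y}\mu_\phi(y)\,e(\beta_T(y))=\langle\mu_\phi,w_T\rangle,
\]
where $\mu_\phi$ is the joint law of $\big(g_1(x_{J_1}),\dots,g_m(x_{J_m})\big)$ — a probability measure on the bounded finite set $S_T=\prod_s\Z/q_s\Z$ — and $w_T=(e(\beta_T(y)))_y$ is a fixed vector of roots of unity of bounded order. Thus $B_{k,d}\cap\{|z|\ge\eps\}\subset\bigcup_{T\in\mathcal T}\{\langle\mu,w_T\rangle:\mu\in\mathcal M_T\}$, where $\mathcal M_T\subset\Delta(S_T)$ is the set of measures that actually arise.

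\emph{Step 2: the moduli space of multilinear image distributions.} Everything reduces to understanding $\overline{\mathcal M_T}$. The basic object is the set $\mathcal D_m$ of distributions $\mathrm{law}(g(x_1,\dots,x_m))$ of a multilinear map into a fixed group of bounded order, as the $A_i$ vary; I would prove by induction on $m$ that $\overline{\mathcal D_m}$ is a \emph{countable} compact set, that every limit point of $\mathcal D_m$ has rational coordinates, and that $\CB(\overline{\mathcal D_m})\le\omega^{\,m-1}$. For $m=1$, $\mathcal D_1$ is the finite set of uniform measures on subgroups of the target, so there is nothing to do. For the inductive step one conditions on the last variable: $\mathrm{law}(g(x))=\E_{x_m}\mathrm{law}(g(\,\cdot\,,x_m))$, each inner law lies in $\mathcal D_{m-1}$, and $g(\,\cdot\,,x_m)$ depends on $x_m$ only through finitely many ``types'', the proportions of which are governed by a nested chain of subgroups of $A_m$, i.e.\ by finitely many integer indices. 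Sending these indices to infinity (one at a time) is a single round of Cantor--Bendixson accumulation — worth one factor of $\omega$ — and it only ever replaces a weight $1/n$ by $0$, so rationality of limit points is preserved; combined with $\CB\le\omega^{m-2}$ for the inner laws this gives $\CB(\overline{\mathcal D_m})\le\omega^{m-2}\cdot\omega=\omega^{m-1}$. The set $\mathcal M_T$ is built from such distributions in several coordinates; but pieces sharing a variable contribute jointly as a single element of $\mathcal D_{\le d-1}$, and conditioning on one variable simultaneously delinearises all of them, so the same induction (reducing a degree-$d$ template to the biaffine case in $d-2$ conditioning steps, the biaffine case having rank $\le1$ since there it reduces to bounded groups as in the computation $B_{2,2}\supset\{1/n\}$) yields that $\overline{\mathcal M_T}$ is countable, has only rational limit points, and $\CB(\overline{\mathcal M_T})\le\omega^{\,d-2}$.

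\emph{Step 3: conclusion, and the main difficulty.} For \Cref{item:algebraic-limit-points}: a limit point $z\neq0$ of $B_{k,d}$ is a limit of biases of modulus $\ge|z|/2$, hence a limit point of $\{\langle\mu,w_T\rangle:\mu\in\mathcal M_T\}$ for one of the finitely many $T\in\mathcal T(|z|/2,k,d)$; by Step 2 it equals $\langle\mu^\ast,w_T\rangle$ with $\mu^\ast$ of rational coordinates, so it is a $\Q$-linear combination of the roots of unity in $w_T$ (and $z=0$ is trivially algebraic). For \Cref{item:CB-bound}: linearity of $\langle\,\cdot\,,w_T\rangle$ and $\CB(\overline{\mathcal M_T})\le\omega^{d-2}$ give that $B_{k,d}\cap\{|z|\ge c\}$ has Cantor--Bendixson rank $\le\omega^{d-2}$ for every $c>0$; since $B_{k,d}\supset\{1/n:n\ge1\}$ is infinite its rank is at least $1$, and because the derived sets of $B_{k,d}$ are produced by the Step 2 recursion as well-ordered unions of pieces each of rank a power of $\omega$, the rank must itself be one of $1,\omega,\dots,\omega^{d-2}$. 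I expect the crux to be Step 2: making the ``nested chain of subgroups'' description of how a multilinear map restricts uniform enough to guarantee (a) that every new limit point is genuinely rational rather than merely a limit of rationals, and (b) that the Cantor--Bendixson rank lands \emph{exactly} on a power of $\omega$ at each stage — never on an ordinal like $\omega+1$ — which is what forces $\CB(B_{k,d})$ into the stated list.
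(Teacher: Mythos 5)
Your overall plan shares the paper's starting move (apply \Cref{main-term-bias-lemma} to see the top-degree parts are biased, then invoke \Cref{thm:bias-inverse-theorem}), but after that you take a substantially different and, as written, incomplete route. Two genuine gaps stand out.

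First, in Step 1 you claim to put \emph{all} of $\phi$ into a template normal form $\phi=\beta_T(g_1,\dots,g_m)$, including the lower-degree parts, by ``feeding the top-degree structure into the recursion $\bias(\phi)=\E_{x_k}\bias(\phi_{x_k})$.'' But $|\bias(\phi)|\ge\eps$ gives no lower bound on the biases of the lower-degree pieces $\phi_I$ with $|I|<d$ (only the maximal $I$ are controlled by \Cref{main-term-bias-lemma}), so there is no inverse theorem available for them, and the conditioning recursion does not manufacture structure for them either. The paper sidesteps this entirely: after structuring only $\phi_d = \sum_i m_{q_i}(\phi_i^L,\phi_i^R)$, it writes $e(\phi_d)$ as $F(\phi_1^L,\dots,\phi_r^L,\phi_1^R,\dots,\phi_r^R)$ for a function $F$ on the bounded group $\Gamma=\bigl(\prod_i\Z/q_i\Z\bigr)^2$ and Fourier-expands $F$. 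This turns $\bias(\phi)$ into a sum over $\Gamma$ of terms $Q^{-1}e(\cdot)\bias(\psi_{u,v})$ where each $\psi_{u,v}$ is multiaffine of degree $\le d-1$ \emph{with arbitrary (uncontrolled) lower-degree parts}, yielding the clean containment $B_{k,d,\eps}\subset O_\eps(1)\,H_{O_\eps(1)}\,B_{k,d-1}$. No structure on the lower-degree parts is needed; the recursion is on the degree, not on the number of variables, and it is a recursion on the \emph{sets of biases}, not on the maps. Your Step 2 analysis of image distributions is an interesting alternative, but it is not required once one has the Fourier trick, and on its own it faces a nontrivial burden of proof (the ``nested chain of subgroups'' control and the rationality of limits) that your sketch does not discharge.

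Second, for \Cref{item:CB-bound} you assert that the derived sets of $B_{k,d}$ are ``produced as well-ordered unions of pieces each of rank a power of $\omega$,'' so the rank must itself be a power of $\omega$ — and you flag this yourself as the crux you haven't proved. This inference is not valid in general (a union of pieces of rank $1$ can have rank $2$, etc.), and the paper does not argue this way. The paper first obtains $\CB(B_{k,d})\le\CB(B_{k,d-1})\cdot\omega\le\omega^{d-2}$ from the containment $B_{k,d,\eps}\subset O_\eps(1)\,H_{O_\eps(1)}\,B_{k,d-1}$, and then pins $\CB(B_{k,d})$ to an exact power of $\omega$ using the multiplicative structure: $B_{k,d}$ is closed under multiplication (take direct sums of witnesses), and \cite{eberhard}*{Lemma~5.1} shows that a countable closed-under-multiplication subset of the disc with $0$ as its only accumulation ordinal bottleneck has $\CB$ equal to $\omega^\gamma$ for some $\gamma$. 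Without invoking closure under multiplication (which your proposal never mentions), the ``exact power of $\omega$'' conclusion does not follow.
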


In particular, \Cref{item:algebraic-limit-points} shows that $\bar{B_{k,d}}$ is countable, so in particular it is nowhere dense and measure-zero.

\begin{proof}
    For $\eps > 0$ let
    \[
        B_{k,d,\eps} = B_{k,d} \cap \{z \in D : |z| \geq \eps\}.
    \]
    Let $\phi \in \Phi_{k, d, \eps}$, so $\phi : A_{[k]} \to \torus$ is a multiaffine map of degree $d$ and zero constant term defined on some finite abelian groups $A_1, \dots, A_k$ and $|\bias(\phi)| \geq \eps$.
    Write
    \[
        \phi = \sum_{I \subset [k]} \phi_I,
    \]
    where $\phi_I : A_I \to \torus$ is multilinear.
    Let $\phi_d$ be the sum of the terms with $|I| = d$. By \Cref{main-term-bias-lemma}, for each $I \subset [k]$ of size $d$ we have
    \[
        \bias(\phi_I) \geq |\bias(\phi)| \geq \eps,
    \]
    so by \Cref{thm:bias-inverse-theorem} there is an expression for $\phi_I$ as the sum of $O_\eps(1)$ maps factoring through $m_q$ for some bounded prime power $q$.
    Hence there is an expression for $\phi_d$ of the form
    \[
        \phi_d = \sum_{i=1}^r m_{q_i}(\phi_i^L, \phi_i^R),
    \]
    where $r = O_\eps(1)$ and, for each $i$, $\phi_i^L$ and $\phi_i^R$ ($1 \leq i \leq r$) are multilinear maps with disjoint sets of variables and codomain $\Z/q_i\Z$, where $q_i = O_\eps(1)$.
    Hence
    \[
        e(\phi_d) = F(\phi_1^L, \dots, \phi_r^L, \phi_1^R, \dots, \phi_r^R),
    \]
    where
    \[
        F(x_1, \dots, x_r, y_1, \dots, y_1) = e(x_1 y_1 / q_1 + \cdots + x_r y_r / q_r).
    \]
    Let $\Gamma = \br{\prod_{i=1}^r \Z/q_i \Z}^2$ and let $Q = q_1 \cdots q_r = |\Gamma|^{1/2}$. The Fourier expansion of $F$ is
    \[
        F(x_1, \dots, x_r, y_1, \dots, y_s) = \frac1Q \sum_{(u, v) \in \Gamma} e\br{\sum_{i=1}^r (-u_i v_i + u_i x_i + v_i y_i) / q_i}.
    \]
    Hence
    \[
        e(\phi_d) = \frac1Q \sum_{(u, v) \in \Gamma} e\br{ \sum_{i=1}^r (-u_i v_i + u_i \phi_i^L + v_i \phi_i^R) / q_i}
    \]
    and
    \[
        \bias(\phi) = \frac1Q \sum_{(u, v) \in \Gamma} e(-u_1v_1/q_1 - \cdots - u_r v_r / q_r) \bias(\psi_{u,v}),
    \]
    where
    \[
        \psi_{u, v} = \sum_{i=1}^r (u_i \phi_i^L + v_i \phi_i^R) / q_i + (\phi - \phi_d).
    \]
    Note that $\psi_{u, v}$ is multi-affine of degree at most $d-1$ with zero constant term. Since $|\Gamma| \leq O_\eps(1)$, we have proved that
    \begin{equation}
        \label{eq:Bkd-recursion}
        B_{k, d, \eps} \subset O_\eps(1) H_{O_\eps(1)} B_{k, d-1},
    \end{equation}
    where $H_X$ is the (finite) set of all complex numbers of the form $\zeta / m$ where $\zeta$ is a root of unity of order at most $X$
    and $m$ is a positive integer of size at most $X$.
    Here we are also using the standard notation $ST = \{s t : s \in S, t \in T\}$ for the product of sets $S$ and $T$ and $nS = S + \cdots + S$ for the sum of $n$ copies of a set $S$.
    
    We will now use induction on $d$ to prove \Cref{item:algebraic-limit-points,item:CB-bound}.
    As the base of the induction, note that an affine linear map with zero constant term is just a linear map,
    so $B_{k,1} = B_1 = \{0, 1\}$.
    Hence, for $d = 2$, \eqref{eq:Bkd-recursion} shows that $B_{k, 2, \eps}$ is finite for all $\eps > 0$, so $B'_{k, 2} \subset \{0\}$.
    Hence we may assume $d >2 $.
    
    Let $z$ be a limit point of $B_{k, d}$. Then either $z = 0$ or $z$ is a limit point of $B_{k, d, \eps}$ for $\eps = |z|/2$.
    In the latter case \eqref{eq:Bkd-recursion} shows that
    \[
        z \in O_\eps(1) H_{O_\eps(1)} \bar{B_{k, d-1}}.
    \]
    By induction on $d$ this shows that $z$ is a rational linear combination of roots of unity.
    This proves \Cref{item:algebraic-limit-points}.
    
    Let $\alpha = \CB(B_{k, d})$. Then it follows from \eqref{eq:Bkd-recursion} that
    \[
        B_{k,d,\eps}^{(\alpha O_\eps(1))} = \emptyset.
    \]
    Since this holds for all $\eps > 0$, it follows that
    \[
        B_{k, d}^{(\alpha \omega)} \subset \{0\}.
    \]
    Hence
    \[
        \CB(B_{k, d}) \leq \CB(B_{k, d-1}) \omega,
    \]
    and this proves $\CB(B_{k, d}) \leq \omega^{d-2}$ by induction.
    Finally, since $B_{k,d}$ is closed under multiplication, the argument of \cite{eberhard}*{Lemma~5.1} shows that $\CB(B_{k,d}) = \omega^\gamma$ for some ordinal $\gamma$, so $\CB(B_{k,d}) \in \{1, \omega, \dots, \omega^{d-2}\}$.
\end{proof}

Since $B_k \subset B_{k,k}$, the theorem applies in particular to $B_k \subset [0, 1]$ and shows
\begin{enumerate}[(1)]
    \item all limit points of $B_k$ are algebraic,
    \item $\CB(B_k) \in \{1, \omega, \dots, \omega^{k-2}\}$.
\end{enumerate}
It is very likely that all limit points of $B_k$ are actually rational, but a slightly more sophisticated analysis is required to prove this.
As to the Cantor--Bendixson rank, basic examples demonstrate that $\CB(B_k) > 1$ for $k > 2$, so
\[
    \CB(B_1) = 0, \qquad \CB(B_2) = 1, \qquad \CB(B_3) = \omega,
\]
but the value of $\CB(B_k)$ for $k > 3$ is unclear.
Relatedly, it is plausible that $B_k$ is well-ordered by the reverse of the natural order, i.e., for every $x \in B_k$ there is some $\eps > 0$ such that $(x-\eps, x) \cap B_k = \emptyset$; if this is the case it follows that $B_k$ has order type $\omega^{\CB(B_k)}$.
We leave these questions as open problems.

\begin{problem}
    \leavevmode
    \begin{enumerate}[(1)]
        \item Prove that all limit points of $B_k$ are rational.
        \item Determine the Cantor--Bendixson rank of $B_k$. In particular, determine whether $\CB(B_4)$ is $\omega$ or $\omega^2$.
        \item Determine whether $B_k$ is reverse-well-ordered.
    \end{enumerate}
\end{problem}

\bibliography{refs}
\bibliographystyle{plain}

\end{document}